\newcommand{\E}{\mbox{$\mathbb{E}$}}
\newcommand{\nat}{\mbox{$\mathbb{N}$}}
\newcommand{\real}{\mbox{$\mathbb{R}$}}
\newcommand{\Bcl}{\mbox{$\mathcal{B}$}}
\newtheorem{theorem}{Theorem}[section]
 \newtheorem{corollary}[theorem]{Corollary}
 \newtheorem{lemma}[theorem]{Lemma}
 \newtheorem{proposition}[theorem]{Proposition}
 \newdefinition{remark}[theorem]{Remark}
 \newdefinition{example}[theorem]{Example}
\newdefinition{definition}[theorem]{Definition}
 \newproof{proof}{Proof}
 \newproof{pfs}{Proof of Theorem~\ref{th:support}}
 \newproof{pfd}{Proof of Lemma~\ref{lm:delta}}
\newcommand{\leqcx}{\leqslant_{\textit{cx}}}
\newcommand{\Lob}[1]{\mathcal{L}_{#1}}
\newcommand{\pd}{\sqrt{2}}
\newcommand{\pp}{\sqrt{5}}
\numberwithin{equation}{section}
\begin{document}

\begin{frontmatter}



\title{On a generalization of a theorem of Levin and Ste\v{c}kin and inequalities of the Hermite-Hadamard type}


\author{Teresa Rajba}
\ead{trajba@ath.bielsko.pl}
\address{Department of Mathematics, University of Bielsko-Biala, ul. Willowa 2, 43-309 Bielsko-Biala, Poland}

\begin{abstract}
We give new necessary and sufficient conditions for higher order convex ordering. These results generalize the Levin-Ste\v{c}kin theorem (1960) on convex ordering. The obtained results can be useful in the study of the Hermite-Hadamard type inequalities and in particular inequalities between the quadrature operators.

\end{abstract}

\begin{keyword}
convexity of higher order \sep s-convex order \sep Hermite-Hadamard type inequality \sep quadrature operator

\MSC[2010]  26A51, 26D10, 60E15
\end{keyword}

\end{frontmatter}


\section{Introduction and preliminaries}
 In this paper we give new criteria for the verification of higher order convex orders. These criteria can be used to prove the Hermite-Hadamard type inequalities for higher order convex functions.

Let $f \colon [a,b]\to\real$ be a convex function. The following double inequality
\begin{equation}
f\left(\frac{a+b}2\right)\leqslant \frac1{b-a}\int_a^b f(x)\,dx \leqslant \frac{f(a)+f(b)}2 \label{eq:oli0}
\end{equation}
is known as the Hermite-Hadamard inequality for convex functions (see  \cite{DraPe2000} for many generalizations and applications of \eqref{eq:oli0}).

In many papers (see, for example, \cite {Florea, Rajba12f, Rajba14, Rajba12g, Szostok2014a, TSzostok2014, OlbrysSzostok2014}) are studied the Hermite-Hadamard type inequalities based on the convex stochastic ordering properties. In the paper \cite{Rajba12f}, to get a simple proof of some known Hermite-Hadamard type inequalities as well as to obtaining new Hermite-Hadamard type inequalities, is used the Ohlin lemma  on sufficient conditions for convex stochastic ordering. Recently, the Ohlin lemma is also used  to study the inequalities of the Hermite-Hadamard type in \cite {Rajba14, Rajba12g, Szostok2014a, TSzostok2014, OlbrysSzostok2014}. In the papers \cite {Szostok2014a, TSzostok2014, OlbrysSzostok2014}, furthermore, to examine the Hermite-Hadamard type inequalities is used the Levin-Ste\v{c}kin theorem \cite {LevinSteckin1960} (see also \cite {NicPer06}), which gives necessary and sufficient conditions for the stochastic convex ordering.

Let us recall some basic notions and results on the stochastic convex order (see, for example, \cite {DenLefSha98}). As usual, $F_X$ denotes the distribution function of a random variable $X$ and $\mu_X$ is the distribution corresponding to $X$. For real valued random variables $X,Y$ with a finite expectation, we say that $X$ is dominated by $Y$ in \textit{convex ordering} sense if
 $$\E f(X) \leqslant \E f(Y)$$
for all convex functions $f \colon \real \to \real$ (for which the expectations exist). In that case we write $X \leqcx Y$, or $\mu_X \leqcx \mu_Y$.

In the following Ohlin lemma \cite{Ohlin69} are given sufficient conditions for convex stochastic ordering.
\begin{lemma}[\cite{Ohlin69}]\label{lemma:3}
Let $X,Y$ be two random variables such that $\E X=\E Y$. If the distributions functions $F_X, F_Y$ cross exactly one time, i.e., for some $x_0$ holds
\begin{equation}\label{eq:ls1.2}
F_X(x) \leqslant F_Y(x) \textit{ if } x < x_0 \textit{ and } F_X(x) \geqslant F_Y(x) \textit{ if } x > x_0,
\end{equation}
then 
\begin{equation}\label{eq:levin1}
\E f(X) \leqslant \E f(Y)
\end{equation}
for all convex functions $f \colon \real \to \real$.
\end{lemma}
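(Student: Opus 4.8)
The plan is to rewrite the difference $\E f(Y)-\E f(X)$ as a single integral in which the sign of each factor is transparent, and for this I would integrate by parts twice. Put $H:=F_X-F_Y$. The single-crossing hypothesis \eqref{eq:ls1.2} says $H\le 0$ on $(-\infty,x_0)$ and $H\ge 0$ on $(x_0,\infty)$. Using the elementary identity $\E Z=\int_0^\infty\bigl(1-F_Z(x)\bigr)\,dx-\int_{-\infty}^0 F_Z(x)\,dx$, valid for any integrable $Z$, the assumption $\E X=\E Y$ becomes $\int_{\real}H(x)\,dx=0$. Hence the function
\[
\Delta(x):=\int_{-\infty}^x H(t)\,dt
\]
is well defined and bounded, satisfies $\Delta(-\infty)=\Delta(+\infty)=0$, is non-increasing on $(-\infty,x_0]$ and non-decreasing on $[x_0,\infty)$; consequently $\Delta(x)\le 0$ for every $x\in\real$.

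Next I would bring $f$ into play. It suffices to consider a convex $f$ for which both expectations are finite. Write $f(x)=f(x_0)+\int_{x_0}^x f'_+(t)\,dt$, where $f'_+$ is the non-decreasing, right-continuous right derivative of $f$, and let $df'_+$ denote the nonnegative Lebesgue--Stieltjes measure it induces. Starting from $\E f(Y)-\E f(X)=\int_{\real}f\,d(F_Y-F_X)$, one integration by parts moves the differential onto $f$ and yields $\int_{\real}f'_+(x)H(x)\,dx=\int_{\real}f'_+\,d\Delta$, and a second integration by parts gives
\[
\E f(Y)-\E f(X)=-\int_{\real}\Delta(x)\,df'_+(x).
\]
Since $\Delta\le 0$ and $df'_+\ge 0$, the right-hand side is nonnegative, which is the assertion \eqref{eq:levin1}. (The affine part of $f$ drops out precisely because $\E X=\E Y$, which is also what forces the boundary terms in the two integrations by parts to vanish.)

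The main obstacle is making the two integrations by parts legitimate: one must verify that the boundary terms at $\pm\infty$ really vanish --- this is where finiteness of the means, and of $\E f(X)$ and $\E f(Y)$, is used to control $f'_+(x)\bigl(1-F_Z(x)\bigr)$ and $f'_+(x)F_Z(x)$ in the tails --- and one must cope with the fact that a convex $f$ need not be differentiable, which is handled by working throughout with $f'_+$ and its Stieltjes measure rather than with $f''$. A convenient way to bypass the smoothness issue altogether is to prove the inequality first for $f\in C^2$ with $f''\ge 0$, where the computation above is completely routine, and then extend to arbitrary convex $f$ via the representation $f(x)=\alpha+\beta x+\int_{\real}(x-t)_+\,d\nu(t)$ with $\nu\ge 0$ (together with $\E X=\E Y$), which reduces everything to the single family of inequalities $\E(X-t)_+\le\E(Y-t)_+$, $t\in\real$; and this last inequality is nothing but $\Delta(t)\le 0$ rewritten through $\E(Z-t)_+=\int_t^\infty\bigl(1-F_Z(x)\bigr)\,dx$.
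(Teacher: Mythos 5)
Your proof is correct in substance, but it is genuinely a different route from the paper's, because the paper does not prove Ohlin's lemma at all: it quotes it from \cite{Ohlin69}, and only remarks (after Theorem \ref{th:1.3}) that when $\mu_X,\mu_Y$ are concentrated on $[a,b]$ the lemma is an immediate consequence of the Levin--Ste\v{c}kin theorem, since $F_X(a)=F_Y(a)=0$, $F_X(b)=F_Y(b)=1$, equal means give \eqref{eq:ls1.6}, and the single crossing gives \eqref{eq:ls1.5}. Your argument in effect reproves the relevant part of that theorem directly on all of $\real$: your $\Delta(x)=\int_{-\infty}^x(F_X-F_Y)(t)\,dt$ is exactly the quantity controlled in \eqref{eq:ls1.5}, your ``decreases up to $x_0$, then increases back to $0$'' argument for $\Delta\leqslant 0$ is the same observation the paper's remark relies on, and your two integrations by parts are precisely the $n=1$ case of the identities \eqref{eq:2.6}--\eqref{eq:2.7} of Lemma \ref{lemma:2.2} and of the sufficiency half of Lemma \ref{lemma:2.3} (condition \eqref{eq:2.14} with $n=1$ is exactly $\Delta\leqslant 0$). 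What your approach buys is a self-contained proof that does not presuppose Theorem \ref{th:1.3} and works on the whole line without compact support; what it costs are the tail technicalities you yourself flag: the boundary terms at $\pm\infty$ must be shown to vanish using $\E\lvert f(X)\rvert,\E\lvert f(Y)\rvert<\infty$, and the representation $f(x)=\alpha+\beta x+\int(x-t)_+\,d\nu(t)$ is not valid for every convex $f$ on all of $\real$ (one generally needs $(t-x)_+$ splines too, unless $f'_+$ is bounded below) --- harmless here because $(t-x)_+=(x-t)_+-(x-t)$ and $\E X=\E Y$ kills the affine parts. Your fallback (prove the inequality for $C^2$ convex $f$, then reduce the general case to the stop-loss inequalities $\E(X-t)_+\leqslant\E(Y-t)_+$, which are exactly $-\Delta(t)\geqslant 0$) closes these gaps in the standard way, so the proposal stands as a legitimate direct proof of the cited lemma.
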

\begin{remark}
As noticed Szostok in \cite{Szostok2014a}, if the measures $\mu_X $, $\mu_Y$ corresponding to $X$ and $Y$, respectively, are concentrated on the interval $[a,b]$ then, in fact, inequality \eqref{eq:levin1} is satisfied for all continuous convex functions $f \colon [a,b] \to \real$ .
\end{remark}
\begin{remark}
The inequality \eqref{eq:oli0} may be easily proved with the use of the Ohlin lemma (see\cite{Rajba12f}). Indeed, let $X$, $Y$, $Z$ be three random variables with the distributions $\mu_X = \delta_{(a+b)/2}$, $\mu_Y$ which is equally distributed in $[a,b]$ and $\mu_Z = \frac{1}{2}(\delta_a+\delta_b)$, respectively. Then it is easy to see that the pairs $(X,Y)$ and $(Y,Z)$ satisfy the assumptions of the Ohlin lemma, and using \eqref{eq:levin1}, we obtain \eqref{eq:oli0}.
\end{remark}
As we can see, the Ohlin lemma is a strong tool, however, it is worth noticing that in the case of some inequalities, the distribution functions cross more than once. Therefore a simple application of the Ohlin lemma is impossible and some additional idea is needed.

In the papers \cite{TSzostok2014, OlbrysSzostok2014}, the authors used the Levin-Ste\v{c}kin theorem  \cite{LevinSteckin1960} (see also \cite{NicPer06}, Theorem 4.2.7).
\begin{theorem}[\cite{LevinSteckin1960}]\label{th:1.3}
Let $a,b\in\real$, $a<b$ and let $F_1,F_2 \colon [a,b]\to\real$ be functions with bounded variation such that $F_1(a)=F_2(a)$. Then, 
in order that
\begin{equation*}
\int_a^bf(x)dF_1(x)\leqslant\int_a^bf(x)dF_2(x),
\end{equation*}
for all continuous convex functions $f \colon [a,b]\to\real,$
it is necessary and sufficient that $F_1$ and $F_2$ verify the following three conditions:
\begin{eqnarray}
F_1(b)& = & F_2(b)\label{eq:ls1.4},\\
\int_a^b F_1(x)dx & = & \int_a^b F_2(x)dx, \label{eq:ls1.6}\\
  \int_a^x F_1(t)dt & \leqslant & \int_a^x F_2(t)dt\quad for\: all\:x\in(a,b).\label{eq:ls1.5} 
\end{eqnarray}
\end{theorem}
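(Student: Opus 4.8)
The plan is to work throughout with the single function $H = F_2 - F_1$, which is of bounded variation on $[a,b]$ and satisfies $H(a)=0$. The desired inequality becomes $\int_a^b f(x)\,dH(x)\ge 0$ for every continuous convex $f\colon[a,b]\to\real$, and the three conditions \eqref{eq:ls1.4}, \eqref{eq:ls1.6}, \eqref{eq:ls1.5} read, respectively, $H(b)=0$, $\int_a^b H(x)\,dx=0$, and $\int_a^x H(t)\,dt\ge 0$ for $x\in(a,b)$. All the Riemann--Stieltjes integrals below exist because the integrand is continuous and the integrator is of bounded variation, and I shall repeatedly invoke the integration-by-parts identity $\int_a^b u\,dv+\int_a^b v\,du=u(b)v(b)-u(a)v(a)$, valid in this setting.

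For necessity I would test the hypothesized inequality against a short list of convex functions. Applying it to $f\equiv 1$ and to $f\equiv -1$ (affine, hence convex) forces $\int_a^b dH=H(b)-H(a)=0$, which, since $H(a)=0$, is \eqref{eq:ls1.4}. Applying it to $f(x)=x$ and to $f(x)=-x$ forces $\int_a^b x\,dH(x)=0$; integrating by parts and using $H(a)=H(b)=0$ this reads $-\int_a^b H(x)\,dx=0$, i.e.\ \eqref{eq:ls1.6}. Finally, for fixed $t\in(a,b)$ the function $f(x)=(x-t)_+$ is continuous and convex, so $\int_a^b(x-t)_+\,dH(x)\ge 0$; integrating by parts (the boundary terms vanish because $H(b)=0$ and $(a-t)_+=0$, and $d(x-t)_+$ is Lebesgue measure on $(t,b)$) gives $-\int_t^b H(x)\,dx\ge 0$, which combined with $\int_a^b H=0$ is exactly $\int_a^t H(x)\,dx\ge 0$, i.e.\ \eqref{eq:ls1.5}.

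For sufficiency I would first record the same computations in reverse: assuming \eqref{eq:ls1.4} and \eqref{eq:ls1.6}, every affine $f$ gives $\int_a^b f\,dH=0$; assuming \eqref{eq:ls1.5}, every $f(x)=(x-t)_+$ with $t\in(a,b)$ gives $\int_a^b f\,dH=\int_a^t H\ge 0$ (the cases $t=a$, $t=b$ being affine, resp.\ identically zero). A convex piecewise-linear function on $[a,b]$ with interior breakpoints $t_1<\dots<t_m$ and (nondecreasing) slopes $c_0\le c_1\le\dots\le c_m$ can be written as $(\text{affine})+\sum_{i=1}^m (c_i-c_{i-1})(x-t_i)_+$ with nonnegative coefficients, so by linearity $\int_a^b f\,dH\ge 0$ for every such $f$. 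It then remains to approximate a general continuous convex $f$ uniformly on $[a,b]$ by convex piecewise-linear functions --- for instance the piecewise-linear interpolants of $f$ on a sequence of refining partitions, which are convex because the chord slopes of a convex function are nondecreasing and which converge uniformly by uniform continuity of $f$ --- and to pass to the limit using $\bigl|\int_a^b f_n\,dH-\int_a^b f\,dH\bigr|\le \|f_n-f\|_\infty\cdot\mathrm{Var}_{[a,b]}(H)$, which yields $\int_a^b f\,dH\ge 0$, equivalently $\int_a^b f\,dF_1\le\int_a^b f\,dF_2$.

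The only genuinely delicate points are (i) the Riemann--Stieltjes bookkeeping --- justifying the integration-by-parts identity and the evaluation $\int_a^b H\,d(x-t)_+=\int_t^b H$ for a bounded-variation $H$ (standard, and jump conventions at breakpoints are harmless here since $(x-t)_+$ is continuous), and (ii) the uniform approximation step: one should approximate $f$ from inside by interpolants rather than by mollifying an extension, since a finite continuous convex function on $[a,b]$ may have infinite one-sided derivatives at the endpoints. Everything else is routine linearity and estimation.
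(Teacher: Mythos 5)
Your proof is correct, and it takes a genuinely different route from the one in the paper. The paper itself only quotes Theorem~\ref{th:1.3} from Levin--Ste\v{c}kin; its own proof of the generalization (Lemma~\ref{lemma:2.3}, whose case $n=1$ is this theorem) first invokes an unspecified approximation argument to reduce to $f$ of class $C^{n+1}$, then integrates by parts repeatedly (Lemma~\ref{lemma:2.2}) to reach the identity \eqref{eq:2.15}, obtains sufficiency from $f^{(n+1)}\geqslant 0$, and proves necessity of the moment conditions by testing with $\pm x^k$ and necessity of the pointwise inequality by a localization/contradiction argument with a bump-type $f$ whose $(n+1)$st derivative is supported in a small interval. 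You instead test directly with the splines $(x-t)_+$ for necessity (so the pointwise condition \eqref{eq:ls1.5} comes out of a single integration by parts rather than by contradiction), and for sufficiency you decompose convex piecewise-linear functions as an affine part plus a nonnegative combination of such splines and then approximate a general continuous convex $f$ uniformly by its convex piecewise-linear interpolants, controlling the error by $\|f_n-f\|_\infty\cdot\mathrm{Var}_{[a,b]}(H)$. Your route is more elementary and self-contained: it avoids both the smoothing step (which is delicate near the endpoints, as you note, and which the paper does not justify) and the bump-function argument. The paper's route, on the other hand, is tailored to the higher-order generalization: the iterated integration by parts delivers the conditions \eqref{eq:2.11}--\eqref{eq:2.14} and the representation \eqref{eq:2.15} used later, whereas extending your spline argument to $n$-convex functions would require the spline representation/approximation of $n$-convex functions (as in the cited integral-representation results), which is exactly the ingredient the paper's remark after Proposition~\ref{prop:2.5} alludes to. Your two flagged ``delicate points'' are handled correctly: the integration by parts is legitimate since $(x-t)_+$ is continuous and $H$ is of bounded variation, and the interpolant approximation indeed sidesteps the possible infinite one-sided derivatives of $f$ at $a$ and $b$.
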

\begin{remark}
Observe, that if the measures $\mu_X$, $\mu_Y$, corresponding to the random variables $X$, $Y$, respectively, occurring in the Ohlin lemma, are concentrated on the interval $[a,b]$, the Ohlin lemma is an easy consequence of Theorem \ref{th:1.3}. Indeed, $\mu_X$ and $\mu_Y$ are probabilistic measures, thus we have $F_X(a)=F_Y(a)=0$ and $F_X(b)=F_Y(b)=1$. Moreover, $\E X=\E Y$ yields \eqref{eq:ls1.6} and from the inequalities \eqref{eq:ls1.2} we obtain \eqref{eq:ls1.5}.

\end{remark}

Szostok \cite{TSzostok2014} used Theorem \ref{th:1.3} to make an observation, which is more general than Ohlin's lemma and concerns the situation when the functions $F_1$ and $F_2$ have more crossing points than one. First we need the following definitions.

Define the number of sign changes of a function $\varphi \colon \real \to \real$ by
$$
S^-(\varphi) = \sup \{S^-[\varphi(x_1),\varphi(x_2),\ldots,\varphi(x_k)]\colon x_1<x_2<\ldots x_k\in\real,\: k\in\nat\},
$$
where $S^-[y_1,y_2,\ldots,y_k]$ denotes the number of sign changes in the sequence $y_1,y_2,\ldots,y_k$ (zero terms are being discarded). Two real functions $\varphi_1,\varphi_2$ are said to have $n$ crossing points (or cross each other $n$-times) if $S^-(\varphi_1-\varphi_2)=n$. Let $a=x_0<x_1< \ldots<x_n<x_{n+1}=b$. We say that the functions  $\varphi_1,\varphi_2$ crosses $n$-times at the points $x_1,x_2,\ldots,, x_n$ (or that $x_1,x_2,\ldots,, x_n$ are the points of sign changes of $\varphi_1-\varphi_2$) if $S^-(\varphi_1-\varphi_2)=n$ and there exist $a<\xi_1<x_1< \ldots<\xi_n<x_n<\xi_{n+1}<b$ such that $S^-[\xi_1,\xi_2,\ldots,\xi_{n+1}]=n$.

Then, Lemma 2 given in \cite{TSzostok2014} can be rewritten in the following form.
\begin{lemma}[\cite{TSzostok2014}]\label{lemma:1.5}
Let $a,b\in\real$, $a<b$ and let $F_1,F_2 \colon (a,b)\to\real$ be functions with bounded variation such that $F_1(a)=F_2(a)$, $F_1(b)=F_2(b)$, $F=F_2-F_1$, $\int_a^b F(x)dx=0$. Let $a<x_1< \ldots<x_m<b$ be the points of sign changes of $F$ and $F(t)\geq 0$   for $t\in(a,x_1)$.  
\begin{itemize}
\item If $m$ is even then 
the inequality 
\begin{equation}\label{eq:ord1}
\int_a^bf(x)dF_1(x)\leqslant\int_a^bf(x)dF_2(x)
\end{equation}
is not satisfied by all continuous convex functions $f \colon [a,b]\to\real.$ 

\item If $m$ is odd, define $A_i$ ($i=0,1,\ldots,m$, $x_0=a$, $x_{m+1}=b$)
$$
A_i = \int_{x_i}^{x_{}i+1} |F(x)|dx.
$$
Then the inequality \eqref{eq:ord1} is satisfied for all continuous convex functions $f \colon [a,b]\to\real,$  if and only if the following inequalities hold true:
\begin{equation}\label{eq:a1}
\begin{split}
A_0 & \geqslant  A_1, \\
A_0+A_2 & \geqslant  A_1+A_3 ,\\
& \vdots \\
A_0+A_2+ \ldots+A_{m-3} & \geqslant  A_1+A_3+ \ldots+A_{m-2}.
\end{split}
\end{equation}
\end{itemize}
\end{lemma}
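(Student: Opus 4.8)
The plan is to reduce everything to Theorem~\ref{th:1.3} applied to $F_1$ and $F_2$, so the task becomes translating the three Levin-Ste\v{c}kin conditions into statements about the sign-change structure of $F=F_2-F_1$. Conditions \eqref{eq:ls1.4} and \eqref{eq:ls1.6} are already assumed, so the whole content is the equivalence between \eqref{eq:ls1.5}, i.e. $\Phi(x):=\int_a^x F(t)\,dt\geq 0$ for all $x\in(a,b)$, and the system \eqref{eq:a1}. First I would record the basic shape of $\Phi$: since $F(t)\geq 0$ on $(a,x_1)$ and $F$ changes sign exactly at $x_1,\dots,x_m$, the function $\Phi$ is nondecreasing on $(a,x_1)$, nonincreasing on $(x_1,x_2)$, nondecreasing on $(x_2,x_3)$, and so on, alternately. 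Hence on $[a,b]$ the local maxima of $\Phi$ occur among $x_1,x_3,x_5,\dots$ and the local minima among $x_0=a,\,x_2,\,x_4,\dots$; because $\Phi(a)=0$ and (by \eqref{eq:ls1.6}) $\Phi(b)=0$, the sign of $\Phi$ on all of $(a,b)$ is controlled by its values at the even-indexed nodes $x_0,x_2,x_4,\dots,x_{m-1}$ (here $m$ odd makes $x_{m+1}=b$ an even-indexed endpoint with value $0$). Thus $\Phi\geq 0$ on $(a,b)$ if and only if $\Phi(x_{2j})\geq 0$ for every $j$ with $2j\le m-1$.

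Next I would compute these values in terms of the $A_i$. Writing $F=F_2-F_1$ and using that $F$ has constant sign $(-1)^i$ on $(x_i,x_{i+1})$ — positive on the first interval — we get $\int_{x_i}^{x_{i+1}}F(t)\,dt=(-1)^i A_i$. Telescoping,
\[
\Phi(x_{k})=\int_a^{x_k}F(t)\,dt=\sum_{i=0}^{k-1}(-1)^i A_i .
\]
For $k=2j$ this is $A_0-A_1+A_2-\cdots-A_{2j-1}$, and the inequality $\Phi(x_{2j})\geq 0$ reads exactly
\[
A_0+A_2+\cdots+A_{2j-2}\ \geq\ A_1+A_3+\cdots+A_{2j-1},
\]
which, as $j$ ranges over $1,2,\dots,(m-1)/2$, is precisely the list \eqref{eq:a1}. (The top case $k=m+1$ gives $\Phi(b)=A_0-A_1+\cdots+A_m=0$, i.e. the constraint $\int_a^bF=0$ is automatically consistent and contributes nothing new.) Combining this with Theorem~\ref{th:1.3} yields the "if and only if" in the odd case.

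For the even case I would argue directly that \eqref{eq:ls1.5} must fail, so by the necessity part of Theorem~\ref{th:1.3} inequality \eqref{eq:ord1} cannot hold for all continuous convex $f$. Indeed, when $m$ is even the sign of $F$ on the last interval $(x_m,b)$ is $(-1)^m>0$, so $\Phi$ is increasing just to the left of $b$ while $\Phi(b)=0$; hence $\Phi(x)<0$ for $x$ slightly less than $b$, violating \eqref{eq:ls1.5}. The main obstacle is the bookkeeping in the first paragraph — justifying rigorously that nonnegativity of $\Phi$ on the whole interval is equivalent to nonnegativity at the even-indexed sign-change points, which requires care about monotonicity of $\Phi$ on each subinterval and the boundary values $\Phi(a)=\Phi(b)=0$; everything after that is the routine telescoping identity above. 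One should also note the degenerate subcase $m=1$, where \eqref{eq:a1} collapses to the single inequality $A_0\ge A_1$ and the argument still goes through verbatim.
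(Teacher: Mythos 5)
Your argument is correct and coincides with the intended derivation: the paper does not reproduce a proof of Lemma \ref{lemma:1.5} (it is quoted from \cite{TSzostok2014}), but, as the surrounding discussion and Remark \ref{rmk:a2} indicate, the result is obtained exactly as you do it --- apply Theorem \ref{th:1.3} to $F_1,F_2$, note that \eqref{eq:ls1.4} and \eqref{eq:ls1.6} are assumed, and identify condition \eqref{eq:ls1.5} with nonnegativity of $H(x)=\int_a^x F(t)\,dt$ at the even-indexed crossing points, which by your telescoping identity is precisely the system \eqref{eq:a1}. The only caveat is the even-$m$ case: to obtain the strict inequality $H(x)<0$ near $b$ you implicitly use that $F$ is positive on a set of positive measure in $(x_m,b)$, which is the intended nondegenerate reading of ``points of sign changes'' (otherwise monotonicity of $H$ on $(x_m,b)$ only gives $H\leqslant 0$ there).
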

\begin{remark}\label{rmk:a2}
Let
\begin{equation*}
H(x)=\int_a^x F(t)dt.
\end{equation*}
Then the inequalities  \eqref{eq:a1}
are equivalent to the following inequalities
\begin{equation*}
H(x_2)\geqslant  0,\: H(x_4)\geqslant  0,\:H(x_6)\geqslant  0, \:\ldots, \:H(x_{m-1})\geqslant 0.
\end{equation*}
\end{remark}

Now we are going to study Hermite-Hadamard type inequalities for higher-order convex functions. Many results on higher order generalizations of the Hermite-Hadamard type inequality one can found, among others, in \cite{Bes08, BesPal02, BesPal03, BesPal04, BesPal10, DraPe2000, Rajba12f, Rajba12g}. In recent papers \cite{Rajba12f, Rajba12g} the theorem of M. Denuit, C.Lefevre and  M. Shaked \cite{DenLefSha98} on sufficient conditions for $s$-convex ordering was used to prove Hermite-Hadamard type inequalities for higher-order convex functions.

Let us review some notations. The convexity of $n$-th order (or $n$-convexity) was defined in terms of divided differences by Popoviciu \cite{Popoviciu1934}, however, we will not state it have. Instead we list some properties of $n$-th order convexity which are equivalent to Popoviciu's definition (see \cite{Kuczma1985}).

\begin{proposition}\label{prop:16}
A function $f \colon (a,b)\to\real$ is $n$-convex on $(a,b)$ $(n \geqslant 1)$ if and only if its derivative $f^{(n-1)}$ exists and is convex on $(a,b)$ (with the convention $f^{(0)}(x) = f(x)$).
\end{proposition}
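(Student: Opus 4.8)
The plan is to work directly from Popoviciu's definition: $f$ is $n$-convex on $(a,b)$ exactly when every divided difference of order $n+1$ is nonnegative, i.e.\ $[x_0,x_1,\dots,x_{n+1};f]\ge 0$ for all systems of $n+2$ distinct points of $(a,b)$ (for $n=1$ this is just ordinary convexity of $f=f^{(0)}$, since $[x_0,x_1,x_2;f]$ is a difference of two chord slopes). I would derive the equivalence from a single \emph{reduction lemma}: for $n\ge 1$, $f$ is $(n{+}1)$-convex on $(a,b)$ if and only if $f$ is differentiable on $(a,b)$ and $f'$ is $n$-convex on $(a,b)$. Iterating it downward turns convexity of $f^{(n-1)}$ into $2$-convexity of $f^{(n-2)}$, then $3$-convexity of $f^{(n-3)}$, \dots, and finally $n$-convexity of $f$; iterating it upward turns $n$-convexity of $f$ into $(n{-}1)$-convexity of $f'$, $(n{-}2)$-convexity of $f''$, \dots, so that $f^{(n-1)}$ exists and is $1$-convex, i.e.\ convex. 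No hypothesis beyond the mere existence of $f^{(n-1)}$ is needed, since a convex $f^{(n-1)}$ is automatically continuous, and continuity of $f^{(n-1)}$ drags $f,f',\dots,f^{(n-2)}$ along with it.

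The ``easy'' half of the reduction lemma --- $f'$ exists and $n$-convex $\Rightarrow$ $f$ is $(n{+}1)$-convex --- goes as follows. Since $f'$ is $n$-convex it is continuous on $(a,b)$ (by the boundedness/continuity step of the bootstrap below), so $f\in C^{1}(a,b)$. Starting from the elementary identity $[x_0,x_1;f]=\int_0^1 f'\big(x_0+s(x_1-x_0)\big)\,ds$ and applying the divided-difference recursion $[x_0,\dots,x_k;f]=\frac{[x_1,\dots,x_k;f]-[x_0,\dots,x_{k-1};f]}{x_k-x_0}$ repeatedly, one represents $[x_0,\dots,x_{n+2};f]$ as a \emph{positively weighted} average of divided differences of $f'$ of order $n+1$; nonnegativity of the latter then forces nonnegativity of the former. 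Running the same representation in reverse --- letting the points coalesce in the right pattern --- exhibits each order-$(n{+}1)$ divided difference of $f'$ as a limit of order-$(n{+}2)$ divided differences of $f$, so $(n{+}1)$-convexity of $f$ does pass to $n$-convexity of $f'$, \emph{once we know $f'$ exists}.

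Establishing that proviso is the crux and, I expect, the main obstacle. From $(n{+}1)$-convexity of $f$ one has to bootstrap regularity in stages. First, local boundedness on every $[c,d]\subset(a,b)$: comparing one fixed sign-definite divided difference at perturbed arguments bounds $f$ locally. Next, continuity on $(a,b)$: the classical continuity proof for convex functions carries over, with the chord-slope inequality replaced by the appropriate higher-order divided-difference inequality. Finally, existence of $f'$: fix $x_1<\dots<x_{n+1}$ in $(a,b)$ and let a lower variable $x$ increase to $x_1$; nonnegativity of the $(n{+}1)$-convexity inequality makes $x\mapsto[x,x_1,\dots,x_{n+1};f]$, and hence the relevant one-sided difference quotients of $f$, monotone and therefore convergent, and identifying these confluent limits (together with the symmetric ones approaching from the right) produces the one-sided derivatives of $f$ and shows they agree. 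With $f'$ now known to exist --- and, by the reverse representation above, $n$-convex --- the reduction lemma is proved and the induction closes.
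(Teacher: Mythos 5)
The paper does not prove Proposition~\ref{prop:16} at all: it is listed, together with Proposition~\ref{prop:17}, as a known characterization equivalent to Popoviciu's definition, with a citation to \cite{Kuczma1985}. So your proposal must stand on its own as a proof of this classical theorem. Its architecture --- the reduction lemma ``$f$ is $(n{+}1)$-convex iff $f'$ exists and is $n$-convex'', iterated up and down --- is the standard route, and the sufficiency half is essentially sound: your weighted-average identity is correct at the first level, e.g. $[x_0,x_1,x_2;f]=\int_0^1 s\,[\,x_1+s(x_0-x_1),\,x_1+s(x_2-x_1);f'\,]\,ds$, and the general order can be reached either by completing that induction or, more cheaply, by mollifying the continuous $n$-convex function $f'$ and passing to the limit in the finitely many function values entering a divided difference. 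The step ``$f$ $(n{+}1)$-convex and $f'$ exists $\Rightarrow$ $f'$ is $n$-convex'' is also repairable, though not by literal coalescence of nodes: with $g_h(y)=(f(y+h)-f(y))/h$ one has $[y_0,\dots,y_{n+1};g_h]=\tfrac1h\bigl([y_0+h,\dots,y_{n+1}+h;f]-[y_0,\dots,y_{n+1};f]\bigr)$, and moving one node at a time via $[v,A;f]-[u,A;f]=(v-u)[u,v,A;f]$ exhibits this as a positive combination of divided differences of order $n+2$; letting $h\to0^+$ gives the claim.

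The genuine gap is exactly at what you call the crux: two-sided differentiability of an $(n{+}1)$-convex function. Monotonicity of $x\mapsto[x,x_1,\dots,x_{n+1};f]$ does yield, by descending the recursion, finite limits of $[x,x_1;f]$ as $x\to x_1^{\pm}$, hence existence of the one-sided derivatives $f'_{\pm}$; but it cannot yield their equality, and the sentence ``identifying these confluent limits \dots\ shows they agree'' is an assertion, not an argument --- the identical monotone-limit reasoning applies to ordinary convex functions, which do have corners. Equality must come from a mechanism that genuinely uses divided differences of order at least three, and none is given. One way to close it: if $f'_-(c)\neq f'_+(c)$, write $f(c+x)=f(c)+\alpha x+(\beta-\alpha)x_+ +r(x)$ with $r(x)=o(|x|)$, and evaluate the order-$(n{+}2)$ divided difference of $f$ at nodes $c+\delta y_i$ for a fixed pattern $y_0<\dots<y_{n+2}$ straddling $0$: by homogeneity the kink contributes $(\beta-\alpha)\,\delta^{-(n+1)}[y_0,\dots,y_{n+2};(\cdot)_+]$ while $r$ contributes $o(\delta^{-(n+1)})$; since for every order $k\ge3$ the quantity $[y_0,\dots,y_k;(\cdot-t)_+]$ has zero integral in $t$ but is not identically zero, patterns exist making the bracket strictly positive and strictly negative, so one can force the divided difference to be negative for small $\delta$, contradicting $(n{+}1)$-convexity. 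Without this (or Popoviciu's original argument as reproduced in \cite{Kuczma1985}), the hard direction of your reduction lemma --- and hence of the proposition --- is not established; the local-boundedness and continuity steps you mention are comparatively minor but are likewise only sketched.
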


\begin{proposition}\label{prop:17}
Assume that $f \colon [a,b] \to\real$ is $(n+1)$-times differentiable on $(a,b)$ and continuous on $[a,b]$ ($n \geqslant 1$). Then $f$ is $n$-convex if and only if $f^{(n+1)}(x)\geqslant 0$, $x \in (a,b)$.
\end{proposition}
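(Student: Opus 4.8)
The plan is to deduce this from Proposition~\ref{prop:16} together with the classical second-derivative criterion for ordinary convexity. First I would invoke Proposition~\ref{prop:16}: since $f$ is in particular $(n-1)$-times differentiable on $(a,b)$, $f$ is $n$-convex if and only if $g:=f^{(n-1)}$ is convex on $(a,b)$. Because $f$ is assumed $(n+1)$-times differentiable on $(a,b)$, the function $g$ is twice differentiable there, with $g'=f^{(n)}$ and $g''=f^{(n+1)}$. Thus the proposition reduces to the well-known statement that a twice differentiable function $g$ on an open interval is convex if and only if $g''\geqslant 0$ on that interval.

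For the ``only if'' part I would argue that convexity of $g$ forces the chord slope $\frac{g(y)-g(x)}{y-x}$ to be non-decreasing in each of its arguments; letting the increments tend to $0$ this shows $g'$ is non-decreasing, and then $g''=(g')'\geqslant 0$ since the derivative of a non-decreasing differentiable function is non-negative. For the ``if'' part, assuming $g''\geqslant 0$, the mean value theorem gives that $g'$ is non-decreasing; then for $x<z<y$ in $(a,b)$ I would apply the mean value theorem on $[x,z]$ and on $[z,y]$ to obtain $\xi\in(x,z)$, $\eta\in(z,y)$ with $\frac{g(z)-g(x)}{z-x}=g'(\xi)\leqslant g'(\eta)=\frac{g(y)-g(z)}{y-z}$, and a short rearrangement yields the convexity inequality $g(z)\leqslant \frac{y-z}{y-x}\,g(x)+\frac{z-x}{y-x}\,g(y)$. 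Alternatively one may simply quote Taylor's formula with the Lagrange form of the remainder. Either way this step is entirely standard.

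Finally I would assemble the chain of equivalences: $f$ is $n$-convex $\iff$ $f^{(n-1)}$ is convex on $(a,b)$ (Proposition~\ref{prop:16}) $\iff$ $(f^{(n-1)})''=f^{(n+1)}\geqslant 0$ on $(a,b)$ (the criterion just recalled), which is exactly the assertion. The hypothesis that $f$ be continuous on the closed interval $[a,b]$ is not needed for the equivalence on $(a,b)$; it only serves to give meaning to $n$-convexity up to the endpoints, so no genuine obstacle arises. The only point that requires any care is the passage from convexity to monotonicity of $g'$ in the ``only if'' direction, and this is handled by the standard monotonicity of chords.
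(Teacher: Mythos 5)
Your argument is correct, but note that the paper itself offers no proof of this statement: Proposition~\ref{prop:17}, like Proposition~\ref{prop:16}, is quoted as a known characterization of $n$-convexity equivalent to Popoviciu's definition, with a reference to Kuczma's book. So there is no ``paper proof'' to match; what you have done is supply a self-contained derivation, and it is the natural one: reduce via Proposition~\ref{prop:16} to the statement that $g=f^{(n-1)}$ is convex on $(a,b)$, and then apply the classical second-derivative test, both directions of which you handle correctly (monotonicity of chords giving $g'$ non-decreasing, hence $g''\geqslant 0$; conversely $g''\geqslant 0$ giving $g'$ non-decreasing and the two-sided mean value argument giving the convexity inequality). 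The only point I would ask you to make explicit is the endpoint issue you wave at in the last paragraph: Proposition~\ref{prop:16} characterizes $n$-convexity on the open interval, while the statement concerns $f$ on $[a,b]$. In the divided-difference definition, $n$-convexity on $[a,b]$ trivially implies it on $(a,b)$, and conversely the defining inequalities at nodes including $a$ or $b$ are obtained as limits of the corresponding inequalities at interior nodes, using the assumed continuity of $f$ on $[a,b]$; one sentence to this effect closes the gap between the open-interval criterion and the closed-interval statement. With that addition your proof is complete and is in the spirit of the source the paper cites.
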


For real valued random variables $X,Y$ and any integer $s \geqslant 2$ we say that $X$ is dominated by $Y$ in $s$-\textit{convex ordering} sense if $\E f(X) \leqslant \E f(Y)$ for all $(s-1)$-convex functions $f \colon \real \to \real$, for which the expectations exist (\cite{DenLefSha98}). In that case we write $X \leqslant_{s-cx} Y$, or $\mu_X \leqslant_{s-cx} \mu_Y$, or $F_X \leqslant_{s-cx} F_Y$. Then the order $\leqslant_{2-cx}$ is just the usual convex order $\leqslant_{cx}$.

A very useful criterion for the verification of the $s$-convex order is given by Denuit, Lef\`{e}vre and Shaked in \cite{DenLefSha98}.
\begin{proposition}[\cite{DenLefSha98}]\label{prop:18}
Let $X$ and $Y$ be two random variables such that $\E (X^j-Y^j)=0$, $j=1,2,\ldots,s-1$ ($s \geqslant 2$). If $S^-(F_X-F_Y)=s-1$ and the last sign of $F_X-F_Y$ is positive, then $X \leqslant_{s-cx} Y$.
\end{proposition}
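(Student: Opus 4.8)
The plan is to turn the desired inequality $\E f(X)\leqslant\E f(Y)$ into a single pointwise sign condition on an iterated integral of $F_X-F_Y$, by repeated integration by parts, and then to verify that sign condition by a variation-diminishing argument that costs one crossing per integration.

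I would first assume, as is customary in this setting (cf.\ the remarks above), that $\mu_X$ and $\mu_Y$ are supported on a common compact interval $[a,b]$; the general case follows by truncation. Fix an $(s-1)$-convex $f$. By Proposition~\ref{prop:16}, $f^{(s-2)}$ exists and is convex, so $f\in C^{s-2}$, $f^{(s-2)}$ is absolutely continuous on $[a,b]$, and $f^{(s-1)}$ (its right-hand derivative, say) is nondecreasing, whence $df^{(s-1)}\geqslant 0$. Set $\Delta_0=F_X-F_Y$ and $\Delta_k(x)=\int_a^x\Delta_{k-1}(t)\,dt$ for $k\geqslant1$, so $\Delta_k(a)=0$ for every $k$. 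One integration by parts turns $\E f(Y)-\E f(X)=\int_{[a,b]}f\,d(\mu_Y-\mu_X)$ into $\int_a^b f'(x)\Delta_0(x)\,dx$, and $s-1$ further Stieltjes integrations by parts, each using that the relevant $\Delta_k$ vanishes at $b$, give
\begin{equation*}
\E f(Y)-\E f(X)=(-1)^{s-1}\int_a^b \Delta_{s-1}(x)\,df^{(s-1)}(x).
\end{equation*}
The endpoint conditions needed here, $\Delta_1(b)=\dots=\Delta_{s-1}(b)=0$ (while $\Delta_0(b)=0$ automatically), are exactly the content of the moment hypotheses: an induction on $j$, applying the same integration-by-parts computation to $x\mapsto x^j$, shows that $\E X^j=\E Y^j$ for $j=1,\dots,s-1$ is equivalent to $\Delta_1(b)=\dots=\Delta_{s-1}(b)=0$.

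It remains to prove $(-1)^{s-1}\Delta_{s-1}(x)\geqslant 0$ on $[a,b]$; combined with $df^{(s-1)}\geqslant0$ this closes the argument. For this I would establish the following variation-diminishing lemma: if $g$ is integrable on $[a,b]$ with $S^-(g)\leqslant m$ for some $m\geqslant1$, and $G(x)=\int_a^x g$ satisfies $G(b)=0$, then $S^-(G)\leqslant m-1$ and $G$ has the same initial sign as $g$. The proof is a mean-value/Rolle argument: if $G$ changed sign $m$ times, then, since $G(a)=G(b)=0$, applying the mean value theorem on the subintervals cut out by the alternating points of $G$ would produce $m+2$ points at which $g$ takes strictly alternating signs, contradicting $S^-(g)\leqslant m$. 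Since $\Delta_0=F_X-F_Y$ has exactly $s-1$ sign changes with positive last sign, its initial sign is $(-1)^{s-1}$; applying the lemma $s-1$ times (legitimate because $\Delta_1(b)=\dots=\Delta_{s-1}(b)=0$) shows that $\Delta_{s-1}$ has no sign change and still has initial sign $(-1)^{s-1}$, i.e.\ $\Delta_{s-1}$ is of constant sign $(-1)^{s-1}$, which is precisely the inequality sought. (If $\Delta_{s-1}\equiv0$ then $\mu_X=\mu_Y$ and the statement is trivial.)

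The part I expect to be the real obstacle is the regularity bookkeeping along the integration-by-parts chain---in particular justifying the final Stieltjes integration by parts against the nondecreasing (possibly discontinuous) function $f^{(s-1)}$, and checking that only $f^{(s-2)}$ convex (not $f$ smooth) is used---together with the careful tracking of the initial sign through the $s-1$ applications of the variation-diminishing lemma, which is the single point at which the sign convention of the hypothesis is actually consumed.
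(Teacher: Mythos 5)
Note first that the paper does not prove this proposition at all: it is quoted from Denuit, Lef\`{e}vre and Shaked \cite{DenLefSha98}, and the paper's own contribution (Lemma \ref{lemma:2.2}, Lemma \ref{lemma:2.3}, Theorem \ref{th:2.1}, Corollary \ref{cor:2.7b}) is a necessary-and-sufficient version of the same circle of ideas for signed measures on $[a,b]$. Your argument is, in substance, exactly the paper's mechanism: repeated integration by parts reduces $\E f(Y)-\E f(X)$ to a single integral of the $(s-1)$-fold iterated integral $\Delta_{s-1}$ of $F_X-F_Y$ against a nonnegative measure, the equal-moment hypotheses being equivalent to the vanishing of $\Delta_1(b),\dots,\Delta_{s-1}(b)$ (this is the content of \eqref{eq:2.11}--\eqref{eq:2.13} and of Corollary \ref{cor:2.4}), and your sign condition $(-1)^{s-1}\Delta_{s-1}\geqslant 0$ is precisely \eqref{eq:2.14} with $n=s-1$. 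What you add, and what the paper does not need because in Theorem \ref{th:2.1} the sign condition is an assumption, is the Rolle-type variation-diminishing lemma ($S^-(G)\leqslant S^-(g)-1$ when $G(a)=G(b)=0$, with the initial sign preserved), which converts the hypothesis ``$s-1$ crossings, last sign positive'' into the pointwise sign of $\Delta_{s-1}$; that lemma and your sign bookkeeping (initial sign $(-1)^{s-1}$, preserved under each integration) are correct, as your $m+2$-points count shows. Your handling of regularity is also a genuinely different, and arguably cleaner, route than the paper's: you stop the integration by parts at the Stieltjes measure $df^{(s-1)}\geqslant 0$ of the convex function $f^{(s-2)}$, whereas the paper smooths $f$ to class $C^{n+1}$ ``via an approximation argument'' and uses $f^{(n+1)}\geqslant 0$; your version avoids that unproved approximation step.

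The one genuine soft spot is the opening sentence: the proposition as stated concerns arbitrary real random variables and $(s-1)$-convex $f\colon\real\to\real$ with finite expectations, and the reduction to compact support ``by truncation'' is not routine. Truncating $X$ and $Y$ destroys the moment equalities $\E X^j=\E Y^j$ and can change the number of crossings of the distribution functions, so the general case does not simply follow from the $[a,b]$ case; one must either work on the whole line and control the boundary terms at $\pm\infty$ using the assumed integrability (which requires a separate tail estimate), or be content with proving the compactly supported version --- which is in fact all the paper ever uses, as its remark following Proposition \ref{prop:19} indicates. Two smaller bookkeeping points you should make explicit: the degenerate cases in the initial-sign tracking (if some $\Delta_k$ vanishes identically, or on an initial interval, the ``initial sign'' must be interpreted so that the induction still yields $(-1)^{s-1}\Delta_{s-1}\geqslant 0$; this is harmless but should be said), and the choice of $[a,b]$ strictly containing the supports so that possible atoms at the endpoints do not disturb the first integration by parts. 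None of these affects the core of the argument.
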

Proposition \ref{prop:18} can be rewritten in the following form.
\begin{proposition}[\cite{DenLefSha98}]\label{prop:19}
Let $X$ and $Y$ be two random variables such that 
\begin{equation*}
\E (X^j-Y^j)=0,\quad j=1,2,\ldots,s \:(s \geqslant 1).
\end{equation*}
If the distribution functions $F_X$ and $F_Y$ cross exactly $s$-times at points $x_1<x_2< \ldots <x_s$ and 
\begin{equation*}
(-1)^{s+1}\left(F_Y(x)-F_X(x)\right)\geqslant 0\quad for\: all\: x\leqslant x_1
\end{equation*}
then 
\begin{equation}\label{eq:ord5}
\E f(X) \leqslant \E f(Y)
\end{equation}
for all $s$-convex functions $f \colon \real \to \real$.
\end{proposition}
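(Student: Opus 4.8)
The plan is to derive Proposition~\ref{prop:19} from Proposition~\ref{prop:18} by a purely bookkeeping argument, translating the ``crossing'' language of Proposition~\ref{prop:19} into the $S^-$-language of Proposition~\ref{prop:18} and aligning the index shift $s \leftrightarrow s-1$. First I would set $G = F_X - F_Y$ and observe that the hypothesis ``$F_X$ and $F_Y$ cross exactly $s$ times at $x_1 < \dots < x_s$'' is by the definition given in the excerpt exactly the statement $S^-(G) = s$, together with the existence of interlacing points $\xi_1 < x_1 < \xi_2 < \dots < \xi_s < x_s < \xi_{s+1}$ witnessing the $s$ sign changes. The sign condition $(-1)^{s+1}(F_Y - F_X)(x) \ge 0$ for $x \le x_1$ says that $G = F_X - F_Y$ has sign $(-1)^s$ on $(-\infty, x_1)$, equivalently the \emph{first} sign of $G$ is $(-1)^s$; since $G$ changes sign exactly $s$ times, the \emph{last} sign of $G$ is $(-1)^s \cdot (-1)^s = +$, i.e.\ the last sign of $F_X - F_Y$ is positive. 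This is precisely the sign hypothesis of Proposition~\ref{prop:18} with the roles of $X$ and $Y$ as written there.

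Next I would handle the moment conditions. Proposition~\ref{prop:19} assumes $\E(X^j - Y^j) = 0$ for $j = 1, \dots, s$, whereas Proposition~\ref{prop:18} (applied with parameter ``$s$'' replaced by $s+1$) requires $\E(X^j - Y^j) = 0$ for $j = 1, \dots, s$ and $S^-(F_X - F_Y) = s$ with positive last sign --- which is exactly what we have just verified. So I would invoke Proposition~\ref{prop:18} with its integer parameter set equal to $s+1$: this yields $X \leqslant_{(s+1)\text{-}cx} Y$, i.e.\ $\E f(X) \le \E f(Y)$ for all $s$-convex $f \colon \real \to \real$ for which the expectations exist, which is the conclusion \eqref{eq:ord5}.

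The only genuine subtlety --- and the step I expect to need the most care --- is checking that the ``cross exactly $s$ times at points $x_1 < \dots < x_s$'' hypothesis of Proposition~\ref{prop:19} really does imply $S^-(F_X - F_Y) = s$ in the sense required by Proposition~\ref{prop:18}, and that the localized sign condition ``for all $x \le x_1$'' correctly pins down the first sign of $G$ rather than merely a sign at one point. Here one uses that between $-\infty$ and $x_1$ there are no sign changes of $G$ (all $s$ of them occur at or after $x_1$), so the sign of $G$ on the whole ray $(-\infty, x_1)$ is constant and equals the asserted value $(-1)^s$; then a parity count across the $s$ sign changes gives the last sign as $+$. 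With the dictionary ``first sign $(-1)^s$, $s$ sign changes $\Rightarrow$ last sign $+$'' established, the rest is the direct citation of Proposition~\ref{prop:18}. I would also remark that the equivalence of the two formulations is symmetric, so Proposition~\ref{prop:18} can in turn be recovered from Proposition~\ref{prop:19}, which justifies the phrase ``can be rewritten in the following form.''
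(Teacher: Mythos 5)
Your derivation is correct and matches the paper's approach: the paper offers no separate proof, stating only that Proposition~\ref{prop:18} ``can be rewritten in the following form,'' which is exactly the index shift $s\mapsto s+1$ and the sign bookkeeping (first sign $(-1)^s$ of $F_X-F_Y$ on $(-\infty,x_1)$ plus $s$ sign changes forcing a positive last sign) that you spell out. Your explicit verification of these translation details is, if anything, more careful than what the paper records.
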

\begin{remark}
Observe, that if the measures $\mu_X$, $\mu_Y$, corresponding to the random variables $X$, $Y$, respectively, occurring in Proposition \ref{prop:19}, are concentrated on the interval $[a,b]$, then in fact inequality \eqref{eq:ord5} is satisfied for all continuous $s$-convex functions $f \colon [a,b] \to \real$.
\end{remark}

Proposition \ref{prop:19} is a counterpart of the Ohlin lemma concerning convex ordering.  This proposition  gives sufficient conditions for $s$-convex ordering, and is very useful for the verification of higher order convex orders,  however, it is worth noticing that in the case of some inequalities, the distribution functions cross more than $s$-times. Therefore a simple application of this proposition is impossible and some additional idea is needed. 

In this paper we give a theorem on necessary and sufficient conditions for higher order convex stochastic ordering, which can be useful in the study of Hermite-Hadamard type inequalities for higher order convex functions, and in particular inequalities between the quadrature operators. This theorem is a counterpart of the Levin-Ste\v{c}kin theorem \cite{LevinSteckin1960} concerning convex stochastic ordering. The necessary and sufficient conditions, which we give in this paper, concern higher order stochastic convex ordering of signed measures, and are generalization of results given by Denuit, Lef\`{e}vre and Shaked in  \cite{DenLefSha98}. Moreover, our criteria can be easier to checking of higher order convex orders, than those given in \cite{DenLefSha98}.
\section{Main results}
Let  $F_1,F_2 \colon \real\to\real$ be two functions with bounded variation and $\mu_1$, $\mu_2$ be the signed measures corresponding to $F_1$, $F_2 $, respectively. We say that $F_1$ is dominated by $F_2 $ in $(n+1)$-\textit{convex ordering sense} $(n\geqslant 1)$ if 
\begin{equation*}
\int_{-\infty}^{\infty}f(x)dF_1(x)\leqslant\int_{-\infty}^{\infty}f(x)dF_2(x),
\end{equation*}
for all $n$-convex functions $f \colon \real\to\real$.
In that case we write $F_1 \leqslant_{(n+1)-cx} F_2$, or $\mu_1 \leqslant_{(n+1)-cx} \mu_2$. 

In the following theorem we give necessary and sufficient conditions for $(n+1)$-convex ordering of two functions with bounded variation.
\begin{theorem}\label{th:2.1}
Let $a,b\in\real$, $a<b$, $n\in \nat$ and let $F_1,F_2 \colon [a,b]\to\real$ be two functions with bounded variation such that $F_1(a)=F_2(a)$. Then, 
in order that
\begin{equation*}
\int_a^bf(x)dF_1(x)\leqslant\int_a^bf(x)dF_2(x),
\end{equation*}
for all continuous $n$-convex functions $f \colon [a,b]\to\real,$
it is necessary and sufficient that $F_1$ and $F_2$ verify the following  conditions:
\begin{equation}\label{eq:ls2.2}
F_1(b)= F_2(b),
\end{equation}
\begin{equation}\label{eq:ls2.3}
\int_a^b F_1(x)dx = \int_a^b F_2(x)dx, 
\end{equation}
\begin{multline}\label{eq:ls2.4} 
\int_a^b \int_a^{x_{k-1}} \ldots \int_a^{x_1} F_1(t)dtdx_1\ldots dx_{k-1}=\\ 
 \int_a^b \int_a^{x_{k-1}} \ldots \int_a^{x_1} F_2(t)dtdx_1\ldots dx_{k-1},\quad
 for\ \  k=2,\ldots ,n,
 \end{multline}
  \begin{multline}\label{eq:ls2.5} 
(-1)^{n+1}\int_a^x \int_a^{x_{n-1}} \ldots \int_a^{x_1} F_1(t)dtdx_1\ldots dx_{n-1}\leqslant\\ 
 (-1)^{n+1}\int_a^x \int_a^{x_{n-1}} \ldots \int_a^{x_1} F_2(t)dtdx_1\ldots dx_{n-1}, \ \ for \ any \ x\in(a,b).
 \end{multline}
\end{theorem}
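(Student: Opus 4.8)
The plan is to reduce $(n+1)$-convex ordering to ordinary convex ordering by repeated integration by parts and then invoke the Levin--Ste\v{c}kin theorem (Theorem~\ref{th:1.3}) directly. First I would set $F=F_2-F_1$, a function of bounded variation with $F(a)=0$, and introduce the iterated primitives $\Phi_0=F$, $\Phi_k(x)=\int_a^x\Phi_{k-1}(t)\,dt$ for $k=1,\dots,n$, so that $\Phi_k(a)=0$ for every $k$ and $\Phi_k$ is of class $C^{k-1}$ for $k\ge1$. A Fubini computation (the Cauchy formula for repeated integration, $\Phi_k(b)=\frac{1}{(k-1)!}\int_a^b(b-x)^{k-1}F(x)\,dx$) identifies the $k$-fold nested integral in \eqref{eq:ls2.4}, written for $F_2-F_1$, with $\Phi_k(b)$, and the one in \eqref{eq:ls2.5} with $\Phi_n(x)$; hence conditions \eqref{eq:ls2.2}--\eqref{eq:ls2.4} together amount to $\Phi_k(b)=0$ for $k=0,1,\dots,n$, while \eqref{eq:ls2.5} amounts to $(-1)^{n+1}\Phi_n(x)\ge0$ for $x\in(a,b)$. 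For $n=1$ the statement is exactly Theorem~\ref{th:1.3}, so one may assume $n\ge2$.

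The engine is the identity obtained by $n-1$ Riemann--Stieltjes integrations by parts, each time using $\Phi_k(a)=0$ to discard the lower endpoint: for a continuous $n$-convex $f$ (which by Proposition~\ref{prop:16} has $f^{(n-1)}$ convex on $(a,b)$, in particular $f\in C^{n-1}(a,b)$),
\[
\int_a^bf\,dF=\sum_{k=0}^{n-2}(-1)^kf^{(k)}(b)\,\Phi_k(b)+(-1)^{n-1}\int_a^bf^{(n-1)}(x)\,d\Phi_{n-1}(x).
\]
When $\Phi_k(b)=0$ for $k=0,\dots,n-2$ the boundary sum drops out, and with $g=f^{(n-1)}$ the question becomes whether $(-1)^{n-1}\int_a^bg\,d\Phi_{n-1}\ge0$ for all continuous convex $g$. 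Applying Theorem~\ref{th:1.3} to the pair $0$ and $(-1)^{n-1}\Phi_{n-1}$ (both vanishing at $a$), this holds if and only if $(-1)^{n-1}\Phi_{n-1}(b)=0$, $(-1)^{n-1}\int_a^b\Phi_{n-1}(x)\,dx=0$ and $(-1)^{n-1}\int_a^x\Phi_{n-1}(t)\,dt\ge0$ on $(a,b)$, i.e.\ $\Phi_{n-1}(b)=0$, $\Phi_n(b)=0$ and $(-1)^{n+1}\Phi_n(x)\ge0$ on $(a,b)$, since $\int_a^b\Phi_{n-1}=\Phi_n(b)$, $\int_a^x\Phi_{n-1}=\Phi_n(x)$ and $(-1)^{n-1}=(-1)^{n+1}$.

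This gives both implications. For necessity I would first test the hypothesis against polynomials $p$ of degree $\le n$: such a $p$ and $-p$ are both $n$-convex (their $(n-1)$-st derivative is affine), so $\int_a^bp\,dF=0$; choosing $p(x)=x^j$, $j=0,\dots,n$, the case $j=0$ gives $F(b)=0$, and for $j\ge1$ integration by parts gives $\int_a^bx^j\,dF=-j\int_a^bx^{j-1}F(x)\,dx$, which together with the Cauchy formula above forces $\Phi_k(b)=0$ for $k=0,\dots,n$, i.e.\ \eqref{eq:ls2.2}--\eqref{eq:ls2.4}. Given these, the displayed identity (applied to the admissible test function $f$ obtained as an $(n-1)$-fold primitive of an arbitrary continuous convex $g$, for which everything is smooth enough) converts the hypothesis into $(-1)^{n-1}\int_a^bg\,d\Phi_{n-1}\ge0$ for all continuous convex $g$, and the necessity half of Theorem~\ref{th:1.3} yields \eqref{eq:ls2.5}. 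For sufficiency one runs the reduction forward: \eqref{eq:ls2.2}--\eqref{eq:ls2.5} make the boundary sum vanish, and the sufficiency half of Theorem~\ref{th:1.3} gives $(-1)^{n-1}\int_a^bf^{(n-1)}\,d\Phi_{n-1}\ge0$, whence $\int_a^bf\,dF\ge0$.

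The hard part will be the endpoint regularity in the sufficiency direction: a merely continuous $n$-convex $f$ need not have $f^{(n-1)}$ bounded (hence continuous) on the \emph{closed} interval $[a,b]$, so neither the integration-by-parts identity nor the direct appeal to Theorem~\ref{th:1.3} (stated for functions continuous on $[a,b]$) is automatically legitimate. I would circumvent this by mollification: $f\ast\rho_\varepsilon$ is smooth, remains $n$-convex (convolution with a nonnegative kernel preserves nonnegativity of the $n$-th order divided differences), and converges to $f$ uniformly on $[a,b]$ after a standard affine extension of $f^{(n-1)}$ past the endpoints; since $F$ has bounded variation, $\int_a^b(f\ast\rho_\varepsilon)\,dF\to\int_a^bf\,dF$, so the inequality passes to the limit. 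What remains is routine: the bookkeeping identifying the nested integrals with the $\Phi_k$ and the elementary moment computation. As an alternative to the reduction to Theorem~\ref{th:1.3} one can argue directly from the identity $\int_a^b(x-t)_+^n\,dF(x)=(-1)^{n+1}n!\,\Phi_n(t)$ for $t\in(a,b)$ (valid once $\Phi_k(b)=0$ for $k\le n$, and proved by the same integrations by parts, the functions $(x-t)_+^n$ being continuous and $n$-convex), together with the integral representation of $n$-convex functions.
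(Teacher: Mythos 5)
Your argument is sound and reaches the right conditions, but it takes a genuinely different route from the paper. The paper does not invoke the Levin--Ste\v{c}kin theorem at all in the proof: it first proves an iterated integration-by-parts identity (Lemma~\ref{lemma:2.2}), reduces to the single-function statement $\int_a^b f\,dF\geqslant 0$ with $F=F_2-F_1$ (Lemma~\ref{lemma:2.3}), and pushes the parts all the way to $f^{(n+1)}$, obtaining $\int_a^b f\,dF=(-1)^{n+1}\int_a^b I_{n-1}(x)f^{(n+1)}(x)\,dx$; sufficiency then follows from $f^{(n+1)}\geqslant 0$ (Proposition~\ref{prop:17}), the equalities \eqref{eq:ls2.2}--\eqref{eq:ls2.4} are forced by testing $\pm x^j$ exactly as you do, and the necessity of \eqref{eq:ls2.5} is obtained by a localization argument (a test function with $f^{(n+1)}$ supported in a small interval where the iterated primitive would be negative). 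You instead stop after $n-1$ integrations by parts, set $g=f^{(n-1)}$, and delegate the remaining convex-order equivalence to Theorem~\ref{th:1.3} applied to the pair $0$ and $(-1)^{n-1}\Phi_{n-1}$. What your route buys: the inequality condition \eqref{eq:ls2.5} in both directions comes for free from Levin--Ste\v{c}kin, no bump-function construction is needed, and in the necessity direction the test functions ($(n-1)$-fold primitives of convex $g$) are automatically smooth enough, so no $C^{n+1}$ regularity ever enters. What the paper's route buys: it is self-contained (Theorem~\ref{th:1.3} is recovered as the case $n=1$ rather than being an input), and it yields the explicit representation \eqref{eq:2.15} that is reused later (Remark~\ref{rmk:2.8a}, Corollary~\ref{cor:2.7b}). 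One caveat on your sufficiency step: the paper is equally brief here (``via an approximation argument''), but your specific fix --- an affine extension of $f^{(n-1)}$ past the endpoints before mollifying --- breaks down precisely in the problematic case you identified, when $f^{(n-1)}$ is unbounded near $a$ or $b$ (e.g.\ $f(x)=-\sqrt{b-x}$ for $n=2$); a cleaner repair is to precompose $f$ with an affine contraction of a slightly larger interval onto $[a,b]$ (or to use Bernstein/spline approximation, which preserves $n$-convexity), and then mollify. With that adjustment your proof is complete.
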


First we prove the following lemma.
\begin{lemma}\label{lemma:2.2}
Let $F \colon [a,b]\to\real$ be a function with bounded variation. Let $f \colon [a,b]\to\real$ be an $n$-convex function of the class $C^{n+1}$ on $(a,b)$. Then
\begin{equation}\label{eq:2.6}
\int_a^bf(x)dF(x)=\biggl[F(x)f(x)\biggl]_{x=a}^{x=b}-\int_a^bF(x)f'(x)dx,
\end{equation}
\begin{equation}\label{eq:2.7}
\int_a^bf(x)dF(x)=\biggl[F(x)f(x)\biggl]_{x=a}^{x=b}-\biggl[\int_a^xF(t)dtf'(x)\biggl]_{x=a}^{x=b}+\int_a^b\int_a^xF(t)dtf''(x)dx,
\end{equation}
\begin{multline}\label{eq:2.8} 
\int_a^bf(x)dF(x)=\biggl[F(x)f(x)\biggl]_{x=a}^{x=b}-\biggl[\int_a^xF(t)dtf'(x)\biggl]_{x=a}^{x=b}+\biggl[\int_a^x\int_a^{x_1}F(t)dtdx_1f''(x)\biggl]_{x=a}^{x=b}+\ldots \\
\ldots +\biggl[(-1)^k\int_a^x\int_a^{x_{k-1}}\ldots  \int_a^{x_1}F(t)dtdx_1\ldots dx_{k-1}f^{(k)}(x)\biggl]_{x=a}^{x=b}+\\
+(-1)^{k+1}\int_a^b\int_a^x\int_a^{x_{k-1}}\ldots  \int_a^{x_1}F(t)dtdx_1\ldots dx_{k-1}f^{(k+1)}(x)dx,\quad for \:k=2,\ldots,n.
 \end{multline}
\end{lemma}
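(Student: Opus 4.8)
The plan is to derive all three identities by repeated integration by parts, taking \eqref{eq:2.6} as the base case and generating \eqref{eq:2.7} and \eqref{eq:2.8} by iteration. For $k\geqslant 1$ it is convenient to abbreviate
\[
\Phi_k(x)=\int_a^x\int_a^{x_{k-1}}\ldots\int_a^{x_1}F(t)\,dt\,dx_1\ldots dx_{k-1},\qquad \Phi_0:=F;
\]
each $\Phi_k$ with $k\geqslant 1$ is absolutely continuous on $[a,b]$, hence of bounded variation, and satisfies $\Phi_k(a)=0$ together with $\Phi_k'=\Phi_{k-1}$ (everywhere when $k\geqslant 2$, almost everywhere when $k=1$).

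First I would prove \eqref{eq:2.6}. Since $F$ has bounded variation and $f$, being $C^{n+1}$ on $(a,b)$ and continuous on $[a,b]$, is in particular $C^1$, the Riemann--Stieltjes integral $\int_a^bf\,dF$ exists, $F$ and $f$ share no discontinuity, and the integration-by-parts rule for Stieltjes integrals gives $\int_a^bf\,dF=\bigl[F(x)f(x)\bigr]_{x=a}^{x=b}-\int_a^bF\,df$; replacing $df$ by $f'(x)\,dx$ (legitimate because $f\in C^1$) yields \eqref{eq:2.6}. To pass to \eqref{eq:2.7} I would apply ordinary integration by parts to the term $\int_a^bF(x)f'(x)\,dx=\int_a^b\Phi_0(x)f'(x)\,dx$, using $\Phi_1$ as antiderivative of $\Phi_0$ with $\Phi_1(a)=0$:
\[
\int_a^b\Phi_0(x)f'(x)\,dx=\bigl[\Phi_1(x)f'(x)\bigr]_{x=a}^{x=b}-\int_a^b\Phi_1(x)f''(x)\,dx,
\]
and substitute this into \eqref{eq:2.6}.

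Finally, \eqref{eq:2.8} follows by induction on $k$: \eqref{eq:2.7} is the instance $k=1$ (the intermediate boundary terms being vacuous), and if the formula holds at level $k$ then its last summand equals $(-1)^{k+1}\int_a^b\Phi_k(x)f^{(k+1)}(x)\,dx$; a single further integration by parts, with $\Phi_{k+1}$ the antiderivative of $\Phi_k$ vanishing at $a$, rewrites this summand as
\[
(-1)^{k+1}\bigl[\Phi_{k+1}(x)f^{(k+1)}(x)\bigr]_{x=a}^{x=b}+(-1)^{k+2}\int_a^b\Phi_{k+1}(x)f^{(k+2)}(x)\,dx,
\]
which is exactly the additional boundary term and the new remainder integral appearing in the formula at level $k+1$. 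Because $f\in C^{n+1}$ on $(a,b)$, the derivatives $f',\dots,f^{(n+1)}$ that occur all exist, so the recursion proceeds up to $k=n$. The only points needing a little care are the validity of Stieltjes integration by parts in the base step (ensured by $F$ being of bounded variation against the $C^1$ integrand $f$) and the meaning of the endpoint values in the brackets $\bigl[\Phi_k(x)f^{(k)}(x)\bigr]_{x=a}^{x=b}$ when $f$ is only $C^{n+1}$ on the open interval; since $\Phi_k(a)=0$ for every $k\geqslant 1$, only the value at $b$ (interpreted, if necessary, as a one-sided limit) enters, so no difficulty arises. I expect the sign and index bookkeeping for the nested integrals to be the one genuinely error-prone part, but it is entirely mechanical.
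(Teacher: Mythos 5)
Your proof is correct and follows essentially the same route as the paper: establish \eqref{eq:2.6} by Stieltjes integration by parts, then obtain \eqref{eq:2.7} and \eqref{eq:2.8} by induction on $k$, integrating the last remainder integral by parts against the next iterated antiderivative (your $\Phi_k$ is the paper's $I_{k-1}$). The only difference is cosmetic bookkeeping (you start the induction at the instance corresponding to \eqref{eq:2.7}, the paper at $k=2$), so no further comment is needed.
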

\begin{proof}
The proof is by induction. Integrating by parts and using the equalities $F(x)=\left( \int_a^xF(t)dt \right)'$ and $\int_a^xF(t)dt=\left(\int_a^x\int_a^{x_1}F(t)dtdx_1 \right)'$, we obtain immediately \eqref{eq:2.6}, \eqref{eq:2.7} and \eqref{eq:2.8} for $k=2$.

Put
$$
I_k(x)=\int_a^x\int_a^{x_{k}}\ldots  \int_a^{x_1}F(t)dtdx_1\ldots dx_{k},\quad for \:x\in(a,b), \:k=1,2,\ldots,n.
$$
Then we have
\begin{equation}\label{eq:2.9}
I_{k-1}(x)=\left(I_k(x)\right)',\quad for \:x\in(a,b), \:k=1,2,\ldots,n.
\end{equation}
Assume that \eqref{eq:2.8} holds for some $k=2,\ldots,n-1$. Integrating by parts and using \eqref{eq:2.9}, we obtain that the last summand in \eqref{eq:2.8} can be rewritten in the form
\begin{multline*}
(-1)^{k+1}\int_a^b\int_a^x\int_a^{x_{k-1}}\ldots  \int_a^{x_1}F(t)dtdx_1\ldots dx_{k-1}f^{(k+1)}(x)dx=\\=(-1)^{k+1}\int_a^bI_{k-1}(x)f^{(k+1)}(x)dx 
=\biggl[I_k(x)f^{(k+1)}(x)\biggl]_{x=a}^{x=b}+(-1)^{k+2}\int_a^bI_{k}(x)f^{(k+2)}(x)dx, 
 \end{multline*}
which implies that \eqref{eq:2.8} holds for $k+1$. Thus \eqref{eq:2.8} holds for all $k=2,\ldots,n$. The lemma is proved.
\end{proof}
The proof of Theorem \ref{th:2.1} is an immediate consequence of the following lemma.
\begin{lemma}\label{lemma:2.3}
Let $F \colon [a,b]\to\real$ be a function with bounded variation such that $F(a)=0$. Then in order that  
\begin{equation}\label{eq:2.10}
\int_a^bf(x)dF(x)\geqslant 0,
\end{equation}
for all continuous $n$-convex functions $f \colon [a,b]\to\real,$
it is necessary and sufficient that $F$ satisfies the following  conditions:
\begin{align} 
F(b)&=0, \label{eq:2.11}\\ 
\int_a^b F(x)dx & =0,\label{eq:2.12}\\
\int_a^b \int_a^{x_{k-1}} \ldots \int_a^{x_1} F(t)dtdx_1\ldots dx_{k-1}&=0,\quad
 for\ \  k=2,\ldots ,n,\label{eq:2.13}\\
 (-1)^{n+1}\int_a^x \int_a^{x_{n-1}} \ldots \int_a^{x_1} F(t)dtdx_1\ldots dx_{n-1}&\geqslant 0,\quad for \ any \ x\in(a,b). \label{eq:2.14}
 \end{align}
\end{lemma}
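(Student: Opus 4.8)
\textbf{Proof proposal for Lemma~\ref{lemma:2.3}.}
The plan is to treat sufficiency and necessity separately, using Lemma~\ref{lemma:2.2} as the computational engine for the first and carefully chosen test functions for the second. For sufficiency, I would first handle the smooth case: assume $f$ is $n$-convex and of class $C^{n+1}$ on $(a,b)$. Apply formula \eqref{eq:2.8} with $k=n$. The conditions \eqref{eq:2.11}, \eqref{eq:2.12} and \eqref{eq:2.13} kill all the boundary terms $\bigl[I_j(x)f^{(j)}(x)\bigr]_{x=a}^{x=b}$ for $j=0,1,\ldots,n$: at $x=a$ every iterated integral $I_j(a)$ vanishes, and at $x=b$ the relevant iterated integral vanishes by \eqref{eq:2.11}--\eqref{eq:2.13} (here one must also note that $F(a)=0$ is needed so that the very first boundary term at $x=a$, namely $F(a)f(a)$, is zero). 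What survives is exactly
\begin{equation*}
\int_a^b f(x)\,dF(x) = (-1)^{n+1}\int_a^b \Bigl(\int_a^x\int_a^{x_{n-1}}\cdots\int_a^{x_1}F(t)\,dt\,dx_1\cdots dx_{n-1}\Bigr) f^{(n+1)}(x)\,dx.
\end{equation*}
Since $f$ is $n$-convex, $f^{(n+1)}\geq 0$ on $(a,b)$ by Proposition~\ref{prop:17}, and by \eqref{eq:2.14} the iterated-integral factor, multiplied by $(-1)^{n+1}$, is nonnegative; hence the integral is $\geq 0$, proving \eqref{eq:2.10} in the smooth case. To pass to a general continuous $n$-convex $f$, I would approximate $f$ uniformly on $[a,b]$ by $C^{n+1}$ $n$-convex functions (e.g.\ via convolution with a mollifier applied to $f^{(n-1)}$, which exists and is convex by Proposition~\ref{prop:16}, then integrating back $n-1$ times); uniform convergence $f_m\to f$ together with the bounded variation of $F$ gives $\int f_m\,dF \to \int f\,dF$, and the inequality is preserved in the limit.

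For necessity, I would plug specific continuous $n$-convex test functions into \eqref{eq:2.10}. First, the functions $f(x) = \pm x^j$ are $n$-convex for $j=0,1,\ldots,n$ (their $(n+1)$st derivative is zero, so they are both $n$-convex and $n$-concave); applying \eqref{eq:2.10} to $f$ and $-f$ forces $\int_a^b x^j\,dF(x)=0$ for $j=0,\ldots,n$. A repeated integration by parts (equivalently, reading Lemma~\ref{lemma:2.2} with $f(x)=x^j$) converts these moment conditions into precisely \eqref{eq:2.11}, \eqref{eq:2.12} and \eqref{eq:2.13}: the constant $1$ gives $F(b)=0$; then $x$ gives $\int_a^b F(x)\,dx=0$; and inductively $x^j$ gives the $j$-th iterated integral condition. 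To obtain \eqref{eq:2.14}, fix $x_0\in(a,b)$ and consider the function $g_{x_0}(x) = \bigl((x-x_0)^+\bigr)^{n}$ (the truncated power, or rather $(-1)^{n+1}$ times a suitable truncated power), which is continuous and $n$-convex on $[a,b]$: its $(n-1)$st derivative is a constant multiple of $(x-x_0)^+$, which is convex. Feeding this into \eqref{eq:2.10} and integrating by parts $n$ times, using the already-established conditions \eqref{eq:2.11}--\eqref{eq:2.13} to discard boundary terms, isolates exactly the quantity $(-1)^{n+1}\int_a^{x_0}\int_a^{x_{n-1}}\cdots\int_a^{x_1}F(t)\,dt\,dx_1\cdots dx_{n-1}$, whose nonnegativity is \eqref{eq:2.14}.

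The step I expect to be the main obstacle is the smoothing argument in the sufficiency direction: one must verify that a general continuous $n$-convex function on the closed interval $[a,b]$ can be approximated uniformly by $C^{n+1}$ functions that are still $n$-convex, paying attention to the behaviour near the endpoints (where $f^{(n-1)}$, though convex on the open interval, need not be bounded or extend continuously). The cleanest route is probably to first establish the result for $f$ that are $C^{n+1}$, then note that any continuous $n$-convex $f$ on $[a,b]$ is a uniform limit on compact subintervals of such functions, combine this with a boundary-continuity argument for $f$ itself (an $n$-convex function with $n\geq 1$ is in particular convex after one integration, hence continuous on the closed interval), and finally invoke that $F$ has bounded variation so that $\int_a^b f_m\,dF \to \int_a^b f\,dF$. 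An alternative that sidesteps the smoothing is to use the integral representation of $n$-convex functions as mixtures of the truncated powers $(x-t)^n_+$ plus a polynomial of degree $\le n$ (a Taylor-type / spline representation); then \eqref{eq:2.11}--\eqref{eq:2.13} handle the polynomial part and \eqref{eq:2.14} handles each truncated power, and the general case follows by taking the mixture. I would present the spline-representation approach as the main line and mention the mollifier approach as a remark.
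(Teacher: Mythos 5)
Your proposal is correct, and its sufficiency half plus the moment part of necessity coincide with the paper's argument: the paper likewise reduces (after a one-line ``via an approximation argument'' restriction to $C^{n+1}$) to formula \eqref{eq:2.8} with $k=n$, kills the boundary terms by \eqref{eq:2.11}--\eqref{eq:2.13} together with $F(a)=0$, and concludes from $f^{(n+1)}\geqslant 0$; necessity of \eqref{eq:2.11}--\eqref{eq:2.13} is obtained there exactly as you do, from $f=\pm x^j$ and induction. Where you genuinely diverge is the necessity of \eqref{eq:2.14}: the paper argues by contradiction with a localization/bump argument --- if the iterated integral times $(-1)^{n+1}$ were negative at some $x$, it stays negative on an interval $I$, and a smooth $n$-convex $f$ with $f^{(n+1)}$ supported in $I$ contradicts \eqref{eq:2.15} --- whereas you test directly with the truncated power $(x-x_0)_+^n$, which is continuous and $n$-convex, and use the already-established moment conditions to convert $\int_a^b (t-x_0)_+^n\,dF(t)\geqslant 0$ into \eqref{eq:2.14}; this identity is legitimate (it needs $F(a)=0$ and $\int_a^b t^j\,dF(t)=0$ for $j\leqslant n$) and is in fact exactly the computation the paper performs later in the proof of Corollary \ref{cor:2.4} and Remark \ref{rmk:2.8a}. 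Your route is more constructive and makes the link to the spline kernel explicit, at the cost of that extra integration-by-parts identity; the paper's route avoids the identity but needs the existence of suitable smooth test functions. Your discussion of the smoothing step is also more explicit than the paper's (which does not elaborate it at all); your rescale-into-the-interior/mollification sketch, or the spline integral representation you mention, are both standard ways to close that gap, so nothing essential is missing.
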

\begin{proof}
Via an approximation argument we may restrict to the case when $f$ is of the class $C^{n+1}((a,b))$. 

We now prove the sufficiency. By Lemma \ref{lemma:2.2}, using \eqref{eq:2.7} and \eqref{eq:2.8} with $k=n$, and taking into account \eqref{eq:2.11}-\eqref{eq:2.13} we get
\begin{equation}\label{eq:2.15}
\int_a^bf(x)dF(x)=(-1)^{n+1}\int_a^b\int_a^x \int_a^{x_{n-1}} \ldots \int_a^{x_1} F(t)dtdx_1\ldots dx_{n-1}f^{(n+1)}(x)dx.
\end{equation}
Then, by \eqref{eq:2.14} and Proposition \ref{prop:17}, we obtain \eqref{eq:2.10}.

We now prove the necessity. The necessity of \eqref{eq:2.11} follows by checking our statement for $f=1$ and $f=-1$.

 The necessity of \eqref{eq:2.12} follows by checking our statement for $f(x)=x$ and $f(x)=-x$ and by using \eqref{eq:2.11}, \eqref{eq:2.6}.
 
 The necessity of \eqref{eq:2.13} we prove by induction on $k$. The necessity of \eqref{eq:2.13} for $k=2$ follows by checking our statement for $f(x)=x^2$ and $f(x)=-x^2$, using \eqref{eq:2.7} and taking into account \eqref{eq:2.11}, \eqref{eq:2.12}. Assume, that the equality 
 \begin{equation}\label{eq:2.16}
 \int_a^b \int_a^{x_{l-1}} \ldots \int_a^{x_1} F(t)dtdx_1\ldots dx_{l-1}=0
 \end{equation}
 holds , for some $k=2,\ldots ,n-1$ and all $l=2,\ldots ,k$. Then we check our statement for $f(x)=x^{k+1}$ and $f(x)=-x^{k+1}$. Using \eqref{eq:2.8} and taking into account \eqref{eq:2.11}, \eqref{eq:2.12} and \eqref{eq:2.16} for $l=2,\ldots ,k$, we obtain \eqref{eq:2.16} for $l=k+1$. Consequently, we obtain that \eqref{eq:2.13} is satisfied for all $k=2,\ldots ,n$.
 
 By \eqref{eq:2.8} with $k=n$ and taking into account \eqref{eq:2.11}-\eqref{eq:2.13}, we obtain that \eqref{eq:2.15} holds. Then, for the necessity of \eqref{eq:2.14}, notice that $(-1)^{n+1}\int_a^x \int_a^{x_{n-1}} \ldots \int_a^{x_1} F(t)dtdx_1\ldots dx_{n-1}<0$ for some $x\in(a,b)$, yields an interval $I$ around $x$  on which this expression is still negative. Choosing $f$ such that $f^{(n+1)}=0$ outside $I$, the equality \eqref{eq:2.15} leads to a contradiction. Thus \eqref{eq:2.14} is satisfied. The lemma is proved.
 \end{proof}
 \begin{corollary}\label{cor:2.4}
Let $\mu_1$, $\mu_2$ be two signed measures on $\Bcl(\real)$, which are concentrated on $(a,b)$, and such that $\int_a^b\lvert x \rvert ^n \mu_i(dx)<\infty$, $i=1,2$. Then in order that 
\begin{equation*}
\int_a^bf(x)d\mu_1(x)\leqslant\int_a^bf(x)d\mu_2(x),
\end{equation*}
for continuous $n$-convex functions $f \colon [a,b]\to\real$, it is necessary and sufficient that  $\mu_1$, $\mu_2$ verify the following conditions:
\begin{align}
\mu_1\left((a,b)\right)&=\mu_1\left((a,b)\right),\label{eq:2.17}\\
\int_a^b x  ^k \mu_1(dx)&=\int_a^b x  ^k \mu_2(dx),\quad for \:k=1,\ldots ,n,\label{eq:2.18}\\
\int_a^b \bigl(t-x\bigr)  ^n _+\mu_1(dt)&=\int_a^b \bigl(t-x\bigr) ^n _+\mu_2(dt),\quad for \:x\in (a,b),\label{eq:2.19}
\end{align}
where $y^n_+=\Bigl\lbrace max \lbrace y,0\rbrace \Bigr\rbrace ^n$, $y\in \real$.
\end{corollary}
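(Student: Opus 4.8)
The plan is to derive Corollary~\ref{cor:2.4} from Lemma~\ref{lemma:2.3} by rewriting the iterated-integral conditions \eqref{eq:2.11}--\eqref{eq:2.14} in terms of the measures. First I would set $F_i(x)=\mu_i((a,x])$ for $i=1,2$, put $F=F_2-F_1$ and $\mu=\mu_2-\mu_1$. Since each $\mu_i$ is a finite signed measure concentrated on the open interval $(a,b)$, each $F_i$ is of bounded variation on $[a,b]$, satisfies $F_i(a)=0$, and charges neither $a$ nor $b$; hence $F$ is of bounded variation, $F(a)=0$, and for continuous $f$ one has $\int_a^bf\,d\mu_i=\int_a^bf(x)\,dF_i(x)$. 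Consequently the inequality asserted in the corollary is precisely $\int_a^bf(x)\,dF(x)\geqslant 0$ for all continuous $n$-convex $f$, so Lemma~\ref{lemma:2.3} applies directly (in particular, its approximation step need not be repeated), and it remains only to check that \eqref{eq:2.11}--\eqref{eq:2.14} for this $F$ are equivalent to \eqref{eq:2.17}--\eqref{eq:2.19} for the pair $(\mu_1,\mu_2)$.

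The computational core of the argument is the Fubini identity
\[
\int_a^x\int_a^{x_{j-1}}\ldots\int_a^{x_1}F(t)\,dt\,dx_1\ldots dx_{j-1}=\frac{1}{j!}\int_a^x(x-t)^j\,\mu(dt),\qquad j\geqslant 1,\ x\in(a,b],
\]
which I would prove by induction on $j$: the case $j=1$ is $\int_a^xF(t)\,dt=\int_a^x\mu((a,t])\,dt=\int_a^x(x-t)\,\mu(dt)$ by the Tonelli--Fubini theorem (legitimate because $|\mu|$ is finite and the interval is bounded), and the inductive step follows by integrating the identity once more in $x$ and again interchanging the order of integration. Granting this, \eqref{eq:2.11} says $F(b)=\mu((a,b])=0$, i.e.\ $\mu_1((a,b))=\mu_2((a,b))$, which is \eqref{eq:2.17}; and, evaluating the identity at $x=b$, conditions \eqref{eq:2.12} and \eqref{eq:2.13} say precisely that $\int_a^b(b-t)^j\,\mu(dt)=0$ for $j=1,\ldots,n$. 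Expanding $(b-t)^j$ by the binomial theorem and using \eqref{eq:2.17}, a short induction on $j$ shows this system is equivalent to $\int_a^bt^k\,\mu(dt)=0$ for $k=1,\ldots,n$, that is, to \eqref{eq:2.18}.

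Finally, assuming \eqref{eq:2.17} and \eqref{eq:2.18}, the polynomial $t\mapsto(x-t)^n$ has vanishing $\mu$-integral for every $x$; splitting $\int_a^b=\int_{(a,x]}+\int_{(x,b)}$ and inserting the identity with $j=n$ then gives
\[
(-1)^{n+1}\int_a^x\int_a^{x_{n-1}}\ldots\int_a^{x_1}F(t)\,dt\,dx_1\ldots dx_{n-1}=\frac{1}{n!}\int_a^b(t-x)^n_+\,\mu(dt)\qquad(x\in(a,b)),
\]
so condition \eqref{eq:2.14} becomes $\int_a^b(t-x)^n_+\,\mu_1(dt)\leqslant\int_a^b(t-x)^n_+\,\mu_2(dt)$ for all $x\in(a,b)$, which is \eqref{eq:2.19}. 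Combining these three equivalences with Lemma~\ref{lemma:2.3} proves the corollary.

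I do not expect a genuine obstacle: beyond the Fubini identity above the argument is elementary algebra. The points that require care are the justification of the repeated interchanges of integration (ensured by finiteness of the total variations of $\mu_1,\mu_2$, equivalently by the hypothesis $\int_a^b|x|^n\,\mu_i(dx)<\infty$ on the bounded interval), the correct matching of the nested integrals in \eqref{eq:2.13}--\eqref{eq:2.14} with the right-hand side of the identity, and the endpoint conventions, which are harmless precisely because $\mu_1$ and $\mu_2$ charge neither $a$ nor $b$.
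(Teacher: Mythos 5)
Your proposal is correct and follows essentially the same route as the paper: both reduce the corollary to Lemma~\ref{lemma:2.3} (equivalently Theorem~\ref{th:2.1}) applied to $F=F_2-F_1$, and both identify the iterated integrals with $\tfrac{1}{n!}\int_a^b (t-x)_+^n\,d\mu$ by reversing the order of integration and using the moment equalities to pass from $\int_a^x$ to $\int_x^b$. The only cosmetic difference is that the paper gets the equivalence of the moment conditions by applying the integration-by-parts formulas \eqref{eq:2.6}--\eqref{eq:2.8} to $f(x)=\pm x^k$, whereas you evaluate your Fubini identity at $x=b$ and expand binomially; both are routine inductions.
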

\begin{proof}
Let $F_1$, $F_2$ be the distribution functions corresponding to $\mu_1$, $\mu_2$, respectively. Then $\mu_i(dt)=dF_i(t)$, $i=1,2$. Since $\mu_1$ and $\mu_2$ are concentrated on $(a,b)$, we have $F_1(a)=F_2(a)$. That \eqref{eq:ls2.2} and \eqref{eq:2.17} are equivalent is obvious. Put $F=F_2-F_1$. By \eqref{eq:2.6} with $f(x)=x$, and taking into account \eqref{eq:ls2.2}, it follows that the conditions \eqref{eq:ls2.3} and \eqref{eq:2.18} for $k=1$ are equivalent. The equivalence of \eqref{eq:ls2.4} and \eqref{eq:2.18} for $k=2,\ldots ,n$, can be proved, using \eqref{eq:2.7} and \eqref{eq:2.18}, by induction on $k$. We omit the proof.

Next, by reversing the order of integration in \eqref{eq:ls2.5}, we obtain
\begin{align*}
&(-1)^{n+1}\int_a^x \int_a^{x_{n-1}} \ldots \int_a^{x_1} (F_2(t)-F_1(t))dtdx_1\ldots dx_{n-1}\\
&=(-1)^{n+1}\int_a^x \int_a^{x_{n-1}} \ldots \int_a^{x_1} F(t)dtdx_1\ldots dx_{n-1}\\
&=(-1)^{n+1}\int_a^x \dfrac{(x-t)^n}{n!}dF(t)=(-1)^{n+1}(-1)^{n}\int_a^x \dfrac{(t-x)^n}{n!}dF(t)=-\int_a^x \dfrac{(t-x)^n}{n!}dF(t)\\
&=\int_a^b \dfrac{(t-x)^n}{n!}dF(t)-\int_a^x \dfrac{(t-x)^n}{n!}dF(t)=\int_x^b \dfrac{(t-x)^n}{n!}dF(t)=\int_a^b \dfrac{(t-x)^n_+}{n!}dF(t),
\end{align*}
which implies the equivalence of \eqref{eq:ls2.5} and \eqref{eq:2.19}. The corollary is proved.
\end{proof}

Note that Theorem \ref{th:2.1} can be rewritten in the following form.
\begin{theorem}\label{th:2.7a}
Let $F_1,F_2 \colon [a,b]\to\real$ be two functions with bounded variation such that $F_1(a)=F_2(a)$. Let
\begin{align*}
H_0(t_0)&=F_2(t_0)-F_1(t_0), \quad for\: t_0\in [a,b],\\
H_{k}(t_k)&=\int_a^{t_{k-1}}H_{k-1}(t_{k-1})dt_{k-1},\quad for\: t_{k}\in [a,b], \  k=1,2,\ldots ,n.
\end{align*}
Then, 
in order that
\begin{equation*}
\int_a^bf(x)dF_1(x)\leqslant\int_a^bf(x)dF_2(x),
\end{equation*}
for all continuous $n$-convex functions $f \colon [a,b]\to\real,$
it is necessary and sufficient that the following  conditions are satisfied:
\begin{align*}
H_k(b)&=0, \quad for\: k=0,1,2,\ldots ,n,\\
(-1)^{n+1}H_{n}(x)&\geqslant 0 ,\quad for\:all\: x\in (a,b).
\end{align*}
\end{theorem}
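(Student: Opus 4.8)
The plan is to deduce Theorem~\ref{th:2.7a} from Lemma~\ref{lemma:2.3} purely by a change of notation, with $F := F_2 - F_1$. First I would note that $F_1(a) = F_2(a)$ gives $F(a) = H_0(a) = 0$, so Lemma~\ref{lemma:2.3} is applicable to $F$; and that, by linearity of the Riemann--Stieltjes integral, the inequality $\int_a^b f\,dF_1 \le \int_a^b f\,dF_2$ holds for every continuous $n$-convex $f \colon [a,b] \to \real$ if and only if $\int_a^b f\,dF \ge 0$ for every such $f$. This is exactly the hypothesis \eqref{eq:2.10} of Lemma~\ref{lemma:2.3}.

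Next I would identify the recursively defined functions $H_k$ with the iterated integrals of $F$ appearing in Lemma~\ref{lemma:2.3}. Reading the recursion as $H_0 = F$ and $H_k(x) = \int_a^x H_{k-1}(t)\,dt$, an easy induction on $k$ yields
\[
H_k(x) = \int_a^x \int_a^{x_{k-1}} \cdots \int_a^{x_1} F(t)\,dt\,dx_1 \cdots dx_{k-1}, \qquad k = 1, \ldots, n,
\]
the base case being $H_1(x) = \int_a^x F(t)\,dt$ and the inductive step following by substituting the formula for $H_k$ into $\int_a^x H_k(t)\,dt$ and relabelling variables. In particular $H_n$ is the $n$-fold iterated integral of $F$ from $a$ to $x$.

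With this identification in hand, the four groups of conditions of Lemma~\ref{lemma:2.3} translate verbatim into those of Theorem~\ref{th:2.7a}: condition \eqref{eq:2.11} becomes $H_0(b) = 0$, condition \eqref{eq:2.12} becomes $H_1(b) = 0$, condition \eqref{eq:2.13} becomes $H_k(b) = 0$ for $k = 2, \ldots, n$, and condition \eqref{eq:2.14} becomes $(-1)^{n+1} H_n(x) \ge 0$ for all $x \in (a,b)$. Combining the first three lines gives $H_k(b) = 0$ for $k = 0, 1, \ldots, n$, and together with the last line this is precisely the system asserted in Theorem~\ref{th:2.7a}. That completes the argument.

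Since every step is a rewriting, I do not expect a genuine obstacle; the only point needing a little care is the bookkeeping in the induction --- matching the count of integration signs in \eqref{eq:2.13} and \eqref{eq:2.14} with the indices of $H_k$ and $H_n$, and keeping track of which variable serves as the outer limit. As an alternative, one could bypass Lemma~\ref{lemma:2.3} and instead match the conditions \eqref{eq:ls2.2}--\eqref{eq:ls2.5} of Theorem~\ref{th:2.1} directly against those of Theorem~\ref{th:2.7a}, again with $F = F_2 - F_1$; this is exactly what justifies the phrase that Theorem~\ref{th:2.1} can be rewritten in the stated form.
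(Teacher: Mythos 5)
Your proposal is correct and matches the paper's approach: the paper simply observes that Theorem~\ref{th:2.7a} is Theorem~\ref{th:2.1} (equivalently Lemma~\ref{lemma:2.3} applied to $F=F_2-F_1$) rewritten in the notation $H_k$, which is exactly the identification of $H_k$ with the iterated integrals and the translation of conditions \eqref{eq:2.11}--\eqref{eq:2.14} that you carry out. Your version just spells out the bookkeeping that the paper leaves implicit.
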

\begin{remark}\label{rmk:2.8a}
The functions $H_1, \ldots,H_n$, that appear in Theorem \ref{th:2.7a} can be obtained from the following formulas
	\begin{equation}\label{eq:2.8c}
H_{n}(x)=(-1)^{n+1}\int_a^b \frac{(t-x)_+^n}{n!}d (F_2(t)-F_1(t)), 
\end{equation} 
\begin{equation}\label{eq:2.8d}
H_{k-1}(x)=H_{k}^{\,'}(x), \quad k=2,3,\ldots ,n.
\end{equation}
\end{remark}

Note that the function $(-1)^{n+1}H_{n-1}$ that appears in Theorem \ref{th:2.7a} play a role similar to the role of the function $F=F_2-F_1$ in Lemma \ref{lemma:1.5}. Consequently, from Theorem \ref{th:2.7a}, Lemma \ref{lemma:1.5} and Remarks \ref{rmk:a2}, \ref{rmk:2.8a} we obtain immediately the following useful criterion for the verification of higher order convex ordering.
\begin{corollary}\label{cor:2.7b}
Let $F_1,F_2 \colon [a,b]\to\real$ be functions with bounded variation such that $F_1(a)=F_2(a)$, $F_1(b)=F_2(b)$ and $H_{k}(b)=0$ $(k=1,2,\ldots ,n)$, where $H_k(x)$ $(k=1,2,\ldots ,n)$ are given by \eqref{eq:2.8c} and \eqref{eq:2.8d}. Let $a<x_1< \ldots<x_m<b$ be the points of sign changes of the function $H_{n-1}$ and let $(-1)^{n+1}H_{n-1}(x)\geqslant 0$   for $x\in(a,x_1)$.
\begin{itemize}
\item If $m$ is even then the inequality
 \begin{equation}\label{eq:2.7c}
\int_a^bf(x)dF_1(x)\leqslant\int_a^bf(x)dF_2(x),
\end{equation}
is not satisfied by all continuous $n$-convex functions $f \colon [a,b]\to\real$.
 
\item If $m$ is odd, then the inequality \eqref{eq:2.7c} is satisfied for all continuous $n$-convex functions $f \colon [a,b]\to\real$ if and only if 
\begin{equation}\label{eq:2.7d}
(-1)^{n+1}H_n(x_2)\geq 0, (-1)^{n+1}H_n(x_4)\geq 0,\ldots,
 (-1)^{n+1}H_n(x_{m-1})\geq 0.
\end{equation}
\end{itemize}
\end{corollary}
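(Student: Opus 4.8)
The plan is to obtain Corollary~\ref{cor:2.7b} by combining Theorem~\ref{th:2.7a} with the one-dimensional analysis already carried out in Lemma~\ref{lemma:1.5} and Remark~\ref{rmk:a2}; no new idea is needed beyond careful bookkeeping.

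First I would apply Theorem~\ref{th:2.7a}. It asserts that \eqref{eq:2.7c} holds for all continuous $n$-convex $f\colon[a,b]\to\real$ if and only if $H_k(b)=0$ for $k=0,1,\ldots,n$ and $(-1)^{n+1}H_n(x)\geqslant0$ for every $x\in(a,b)$. The standing hypotheses of the corollary ($F_1(a)=F_2(a)$ and $F_1(b)=F_2(b)$, i.e. $H_0(b)=0$, together with $H_k(b)=0$ for $k=1,\ldots,n$) make the first family of conditions automatic, so under these hypotheses \eqref{eq:2.7c} holds for all continuous $n$-convex $f$ if and only if $(-1)^{n+1}H_n(x)\geqslant 0$ for all $x\in(a,b)$.

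Next I would analyze this last inequality as a statement about the single function $G:=(-1)^{n+1}H_n$. By \eqref{eq:2.8d} we have $G'=(-1)^{n+1}H_{n-1}=:\varphi$ on $(a,b)$; moreover $G(a)=0$ from the definition of $H_n$ in Theorem~\ref{th:2.7a}, and $G(b)=(-1)^{n+1}H_n(b)=0$ by hypothesis, so in particular $\int_a^b\varphi=0$. The sign-change points of $\varphi$ coincide with those of $H_{n-1}$, namely $x_1<\cdots<x_m$, and the normalization $(-1)^{n+1}H_{n-1}\geqslant0$ on $(a,x_1)$ means $\varphi\geqslant0$ on $(a,x_1)$. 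Hence $G$ is increasing on $(a,x_1)$ and alternately decreasing and increasing on the consecutive intervals $(x_i,x_{i+1})$, with interior local minima exactly at $x_2,x_4,\ldots$. If $m$ is even, $\varphi\geqslant0$ (and positive on a set of positive measure) on $(x_m,b)$, so $G(x_m)=G(b)-\int_{x_m}^b\varphi<0$; thus $(-1)^{n+1}H_n$ cannot stay nonnegative on $(a,b)$, and by the equivalence of the previous paragraph \eqref{eq:2.7c} fails for some continuous $n$-convex $f$. If $m$ is odd, $\varphi$ is negative near $b$, so $G(x_m)\geqslant0$ automatically, and since $G$ is piecewise monotone with $G(a)=G(b)=0$ one has $G\geqslant0$ on all of $(a,b)$ precisely when $G$ is nonnegative at its interior minima $x_2,x_4,\ldots,x_{m-1}$, i.e. precisely when \eqref{eq:2.7d} holds. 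This dichotomy is exactly the one recorded in Lemma~\ref{lemma:1.5} and Remark~\ref{rmk:a2} (with the substitution $F\leftrightarrow\varphi$, $H\leftrightarrow G$), so it may be quoted from there rather than reproved; Remark~\ref{rmk:2.8a}, i.e. formulas \eqref{eq:2.8c} and \eqref{eq:2.8d}, then supplies the way to compute $H_{n-1}$, $H_n$, the points $x_i$, and the numbers $(-1)^{n+1}H_n(x_{2i})$ from $F_2-F_1$.

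I do not expect a genuine obstacle here. The only points needing attention are: checking $G(a)=G(b)=0$ so that the ``$\int_a^b\varphi=0$, $\varphi\geqslant0$ near $a$'' hypotheses of Lemma~\ref{lemma:1.5} and Remark~\ref{rmk:a2} are really met; observing that $H_{n-1}$ and $(-1)^{n+1}H_{n-1}$ have the same set of sign changes, so that $(-1)^{n+1}H_{n-1}\geqslant0$ on $(a,x_1)$ is the correct analogue of ``$F\geqslant 0$ on $(a,x_1)$''; and, for the even-$m$ case and the ``only if'' direction, using the full equivalence of Theorem~\ref{th:2.7a} (equivalently Lemma~\ref{lemma:2.3}, condition~\eqref{eq:2.14}) to convert a violation of $(-1)^{n+1}H_n\geqslant0$ into an actual continuous $n$-convex counterexample.
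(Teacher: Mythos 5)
Your proposal is correct and follows essentially the same route as the paper: it invokes Theorem~\ref{th:2.7a} to reduce \eqref{eq:2.7c} to the single condition $(-1)^{n+1}H_n\geqslant 0$ on $(a,b)$, and then settles that condition through the sign-change/primitive analysis of $(-1)^{n+1}H_{n-1}$ exactly as in Lemma~\ref{lemma:1.5} and Remarks~\ref{rmk:a2}, \ref{rmk:2.8a}, which is precisely the derivation the paper describes as immediate. You merely spell out the piecewise-monotonicity bookkeeping that the paper leaves implicit.
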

In \cite{DenLefSha98} can be found the following necessary and sufficient conditions for the verification of the $(s+1)$-convex order.
\begin{proposition}[\cite{DenLefSha98}]\label{prop:2.5}
If $X$ and $Y$ are two real valued random variables such that $\E \lvert X \rvert^{s}<\infty$ and  $\E \lvert Y\rvert^{s}<\infty$ then 
\begin{equation}\label{eq:2.5c}
\E f(X) \leqslant \E f(Y)
\end{equation}
for all continuous $s$-convex functions $f \colon \real \to \real$
if and only if 
\begin{align}
\E X^k&=\E Y^k, \quad for \:k=1,2,\ldots,s,\label{eq:2.5a} \\
\E (X-t)^{s}_+&\leqslant \E (Y-t)^{s}_+, \quad for \:all\: t \in \real\label{eq:2.5b}.
\end{align}
\end{proposition}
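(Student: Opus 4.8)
The plan is to establish the proposition as the whole-line counterpart of Lemma~\ref{lemma:2.3} (equivalently of Corollary~\ref{cor:2.4}), applied with $n=s$ to $F=F_Y-F_X$. The necessity of \eqref{eq:2.5a} and \eqref{eq:2.5b} is the easy half: for each $k\in\{1,\dots,s\}$ the monomials $\pm x^{k}$ are $s$-convex, their $(s-1)$-st derivative being $0$, a constant, or the affine map $x\mapsto s!\,x$, all of which are convex; and for each $t\in\real$ the function $x\mapsto(x-t)_+^{s}$ is continuous and $s$-convex, its $(s-1)$-st derivative being $x\mapsto s!\,(x-t)_+$. As $\E|X|^{s},\E|Y|^{s}<\infty$, all these functions are $\mu_X$- and $\mu_Y$-integrable, so substituting them (with both signs, where available) into $\E f(X)\le\E f(Y)$ yields first the moment equalities \eqref{eq:2.5a} and then \eqref{eq:2.5b}.

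For sufficiency, assume \eqref{eq:2.5a} and \eqref{eq:2.5b}. Arguing as in the proof of Lemma~\ref{lemma:2.3} we may take $f$ of class $C^{s+1}$, so $f^{(s+1)}\ge0$ by Proposition~\ref{prop:17}; assume moreover, for the moment, that $f$ has at most polynomial growth of order $s$, so that every quantity below is finite (the general case is recovered in the last paragraph). Put $F=F_Y-F_X$ and, for $k=1,\dots,s$,
$$
\Phi_k(x)=\int_{-\infty}^{x}\!\int_{-\infty}^{x_{k-1}}\!\cdots\!\int_{-\infty}^{x_1}F(t)\,dt\,dx_1\cdots dx_{k-1}=\frac{1}{k!}\int_{-\infty}^{x}(x-t)^{k}\,dF(t)=\frac{\E(x-Y)_+^{k}-\E(x-X)_+^{k}}{k!},
$$
the middle identity obtained by reversing the order of integration as in the proof of Corollary~\ref{cor:2.4}. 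By \eqref{eq:2.5a} the polynomial $x\mapsto\int_{\real}(x-t)^{k}\,dF(t)$ vanishes identically for each $k\le s$; this forces $F(x)\to0$ and $\Phi_k(x)\to0$ as $x\to\pm\infty$, and, applied with $k=s$, it lets us split $\int_{-\infty}^{x}=-\int_{x}^{\infty}$ and rewrite
$$
(-1)^{s+1}\Phi_s(x)=-\frac{(-1)^{s+1}}{s!}\int_{x}^{\infty}(x-t)^{s}\,dF(t)=\frac{1}{s!}\int_{\real}(t-x)_+^{s}\,d\bigl(F_Y-F_X\bigr)(t)=\frac{\E(Y-x)_+^{s}-\E(X-x)_+^{s}}{s!}\ \ge\ 0,
$$
the inequality being precisely \eqref{eq:2.5b}. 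Integrating $\int_{\real}f\,dF$ by parts $s+1$ times as in Lemma~\ref{lemma:2.2}, all boundary terms $\bigl[f^{(j)}(x)\Phi_j(x)\bigr]_{-\infty}^{\infty}$ ($j=0,\dots,s$, with $\Phi_0=F$) drop out, and we obtain
$$
\E f(Y)-\E f(X)=\int_{\real}f\,dF=(-1)^{s+1}\int_{\real}\Phi_s(x)\,f^{(s+1)}(x)\,dx=\int_{\real}\frac{\E(Y-x)_+^{s}-\E(X-x)_+^{s}}{s!}\,f^{(s+1)}(x)\,dx\ \ge\ 0.
$$

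The point requiring care — and the one I expect to be the main obstacle — is the integrability bookkeeping just suppressed: a continuous $s$-convex $f$ may grow faster than any polynomial, so a priori neither $\E f(X)$ nor $\E f(Y)$ need be finite, and one must justify the vanishing of the boundary terms (for which the estimate $|x|^{s-j}\,\E\bigl[|X|^{j}\mathbf{1}_{\{X>x\}}\bigr]\le\E\bigl[|X|^{s}\mathbf{1}_{\{X>x\}}\bigr]\to0$ and its analogues are what is needed) as well as the convergence of the final integral. I would remove the polynomial-growth assumption by truncation: for $R>0$ let $f_R$ be the function with $f_R^{(s+1)}=f^{(s+1)}\mathbf{1}_{[-R,R]}$ whose first $s$ derivatives agree with those of $f$ at $0$; then $f_R$ is $s$-convex, agrees with $f$ on $[-R,R]$, and has polynomial growth of order $\le s$, so the previous paragraph applies and gives $\E f_R(X)\le\E f_R(Y)$ for every $R$. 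Letting $R\to\infty$ yields the proposition — for $s$ odd one has $f_R\uparrow f$ and passes to the limit by monotone convergence (after subtracting, harmlessly by \eqref{eq:2.5a}, a degree-$\le s$ polynomial minorizing $f$), while for $s$ even a symmetric two-sided truncation does the same, the degenerate cases being read with the convention $+\infty\le+\infty$. This truncation/limit step, rather than the algebra of the preceding paragraph, is where the real work lies, and it is carried out along these lines in \cite{DenLefSha98}.
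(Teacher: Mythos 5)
Your proposal is essentially correct, but it is worth pointing out that the paper does not prove Proposition~\ref{prop:2.5} at all: the result is quoted from \cite{DenLefSha98}, and the paper only remarks (i) that \eqref{eq:2.5b} is \eqref{eq:2.5c} specialized to the splines $(x-t)_+^{s}$, which together with the integral representation of $s$-convex functions by splines and polynomials (see \cite{Rajba2011}) is the route actually taken in \cite{DenLefSha98}, and (ii) that when $\mu_X,\mu_Y$ are concentrated on a compact interval the proposition is an easy consequence of Corollary~\ref{cor:2.4}. Your argument is therefore genuinely different: you extend the integration-by-parts machinery of Lemmas~\ref{lemma:2.2}--\ref{lemma:2.3} from $[a,b]$ to the whole line, replacing the endpoint conditions by tail estimates of the type $x^{s-j}\,\E\bigl[|X|^{j}\mathbf{1}_{\{X>x\}}\bigr]\le\E\bigl[|X|^{s}\mathbf{1}_{\{X>x\}}\bigr]\to 0$, and then remove the growth restriction by truncating $f^{(s+1)}$. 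What this buys is a self-contained proof in the spirit of the paper's own main lemma, making transparent that \eqref{eq:2.5a} kills the boundary terms and \eqref{eq:2.5b} is exactly the sign condition on the $s$-fold integral; what the representation-by-splines route buys is that it avoids all the tail and truncation bookkeeping, which is precisely where your write-up is tersest. Two spots deserve more care than you give them: the passage to the limit $R\to\infty$ for $s$ even is only asserted --- there $f_R\uparrow f$ on the right half-line but $f_R\downarrow f$ on the left, so one must split at the origin and combine monotone convergence with domination of the decreasing part by an integrable polynomial (your Taylor polynomial $P$ works), under the implicit standing assumption that $\E f(X)$ and $\E f(Y)$ exist; and the initial reduction to $f\in C^{s+1}$, borrowed verbatim from Lemma~\ref{lemma:2.3}, needs a word on why mollification is harmless for expectations over all of $\real$ (easiest is to note that it suffices to prove the inequality for the truncated smooth functions and pass to the limit once, rather than approximating a general $f$ first). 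Neither issue is a flaw of strategy; both are repairable along the lines you indicate.
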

\begin{remark}
The inequality \eqref{eq:2.5b} coincides with \eqref{eq:2.5c} for the spline function $f(x)=(x-t)^{s}_+$. Moreover, it is well known that $s$-convex function has the integral representation, such that the spline functions are the generating functions (see \cite{Rajba2011}).
\end{remark}
\begin{remark}
Note, that if the measures $\mu_X$, $\mu_Y$, corresponding to the random variables $X$, $Y$, respectively, occurring in Proposition \ref{prop:2.5}, are concentrated on some interval $[a,b]$, then this proposition is an easy consequence of Corollary \ref{cor:2.4}.
\end{remark}
\begin{remark}
Necessary and sufficient conditions for the verification of the $(s+1)$-convex order, which are given in Proposition \ref{prop:2.5}, can be difficult to checking. In this paper, we give conditions that can be easier for verification of higher order convex orders. Moreover, our conditions concern not only probabilistic measures but also signed measures. 
\end{remark}

In the numerical analysis are studied some inequalities, which are connected with quadrature operators. These inequalities, called extremalities, are a particular case of the Hermite-Hadamard type inequalities. Many extremalities are known in the numerical analysis (cf. \cite{Bes08}, \cite{BraSch81}, \cite{BraPet} and the references therein).
The numerical analysts prove them using the suitable differentiability assumptions. As proved W\k{a}sowicz in the papers \cite{SzWas07b}, \cite{SzWas08}, \cite{SzWas10}, for convex functions of higher order some extremalities can be obtained without assumptions of this kind, using only the higher order convexity itself. The support-type properties play here the crucial role. As we show in \cite{Rajba12f, Rajba12g}, some extremalities can be proved using a probabilistic characterization.The extremalities, which we study are known, however our method using the Ohlin lemma \cite{Ohlin69} and the Denuit-Lef\`{e}vre-Shaked theorem \cite{DenLefSha98} on sufficient conditions for the convex stochastic ordering seems to be quite easy. It is worth noting that, these theorems do not apply to proving some extremalities (see \cite{Rajba12f, Rajba12g}). In these cases can be useful results given in this paper.

For a function $f:[-1,1]\to\real$ we consider two operators 
\begin{eqnarray*}
 C(f)&:=&\tfrac{1}{3}\Bigl(
                      f\bigl(-\tfrac{\pd}{2}\bigr)+f(0)+
					  f\bigl(\tfrac{\pd}{2}\bigr)
                     \Bigr),\\\\
										\Lob{4}(f)&:=&\tfrac{1}{12}\Bigl(f(-1)+f(1)\Bigr)
              +\tfrac{5}{12}\Bigl(f\bigl(-\tfrac{\pp}{5}\bigr)
              +f\bigl(\tfrac{\pp}{5}\bigr)\Bigr),
\end{eqnarray*}
connected with Chebyshev and Lobatto quadratures, respectively.	W\k{a}sowicz \cite{SzWas07b}, \cite{SzWas08b} proved that
\begin{equation}\label{eq:w3}
 C(f)\leqslant \Lob{4}(f), \quad if \: f\:is \:3-convex.
\end{equation}
The proof given in \cite{SzWas07b} is rather complicated. This was done using computer software. In \cite{SzWas08b} can be found a new proof, based on the spline approximation of convex functions of higher order.
Using Corollary \ref{cor:2.7b} we give a new much simpler proof of \eqref{eq:w3}.

Since for the random variables $X$ and $Y$ with the distributions
\begin{eqnarray*}
\mu_X & = &\tfrac{1}{3}\Bigl( \delta_{-\frac{\sqrt{2}}{2}} + \delta_0+\delta_\frac{\sqrt{2}}{2}\Bigr), \\ \\
\mu_{Y} & = & \tfrac{1}{12}\Bigl(\delta_{-1}+\delta_1\Bigr)+\tfrac{5}{12}\Bigl(\delta_{-\frac{\sqrt{5}}{5}} + \delta_\frac{\sqrt{5}}{5}\Bigr),
\end{eqnarray*}
respectively, we have
$$
C(f) = E[f(X)],\quad \Lob{4}(f) = E[f(Y)], 
$$
it follows that the inequality \eqref{eq:w3} can be rewritten in terms of higher order convex orderings
\begin{equation}\label{eq:w4}
 X \leqslant_{4-cx} Y.
\end{equation}
It is worth noting, that Proposition \ref{prop:2.5} of  Denuit, Lef\`{e}vre and Shaked does not apply to proving \eqref{eq:w4}, because of the distribution functions   $F_X$ and $F_Y$ cross exactly 5-times. We prove the inequality \eqref{eq:w4} by using Corollary \ref{cor:2.7b}.

We have $F_1=F_X$, $F_2=F_Y$, $H_0=F=F_Y-F_X$. By \eqref{eq:2.8c} and \eqref{eq:2.8d}, we obtain
\begin{eqnarray*}
H_3(x)&=&\tfrac{1}{72}\left\{ \left( -1-x \right)^3_+ +\left( 1-x \right)^3_+ 
+5\left[\left( -\tfrac{\sqrt{5}}{5}-x \right)^3_+ +\left( \tfrac{\sqrt{5}}{5}-x \right)^3_+\right] \right.\\ \\
\quad &\,& \left.-4\left[\left( -1-x \right)^3_+ +\left( -\tfrac{\sqrt{2}}{2}-x \right)^3_+ +\left( -x \right)^3_+ +\left( \tfrac{\sqrt{2}}{2}-x \right)^3_+ \right] \right\},
\end{eqnarray*}
\begin{eqnarray*}
H_2(x)&=&\tfrac{1}{24}\left\{ -\left( -1-x \right)^2_+ -\left( 1-x \right)^2_+ 
-5\left[\left( -\tfrac{\sqrt{5}}{5}-x \right)^2_+ +\left( \tfrac{\sqrt{5}}{5}-x \right)^2_+\right] \right.\\ \\
\quad &\,& \left.+4\left[\left( -1-x \right)^2_+ +\left( -\tfrac{\sqrt{2}}{2}-x \right)^2_+ +\left( -x \right)^2_+ +\left( \tfrac{\sqrt{2}}{2}-x \right)^2_+ \right] \right\}.
\end{eqnarray*}
Similarly, from the equality $H_1(x)=H_2^{\,'}(x)$ can be obtained $H_1(x)$. We compute that $x_1=-1-\sqrt{5}+2 \sqrt{2}$, $x_2=0$, $x_3=1+\sqrt{5}-2 \sqrt{2}$ are the points of sign changes of the function $H_2(x)$. It is not difficult to check that the assumptions of Corollary \ref{cor:2.7b} are satisfied.  Since
$$  
(-1)^{3+1}H_3(x_2)=(-1)^{3+1}H_3(0)= \tfrac{1}{72}+\tfrac{\sqrt{5}}{360}-\tfrac{\sqrt{2}}{72}>0,
$$
it follows that the inequalities \eqref{eq:2.7d} are satisfied. From Corollary \ref{cor:2.7b} we conclude that the relation \eqref{eq:w4} hold.





\bibliographystyle{elsarticle-num}



\end{document}